\documentclass[11pt]{amsart}
\usepackage{xparse,amssymb,amsmath,amsthm,mathrsfs}

\usepackage{tabularx,array}
\usepackage[top=1in, bottom=1.25in, left=1.25in, right=1.25in]{geometry}
\usepackage{amssymb,amsmath,amsthm,mathrsfs}
\usepackage{enumitem}
\usepackage{tikz-cd}
\usetikzlibrary{cd}
\usepackage{wrapfig}

\usepackage[normalem]{ulem}
\usepackage{url}
\usepackage{cite}
\usepackage{amsfonts}
\usepackage{verbatim}
\usepackage{graphicx}
\usepackage{caption} 
\usepackage[all]{xy}
\usepackage{colonequals}
\usepackage{subcaption}

\usepackage[utf8]{inputenc}
\usepackage{geometry}
\usepackage{amsfonts}
\usepackage{graphicx}
\usepackage{mathrsfs}
\usepackage{amsmath}
\usepackage{mathtools}
\usepackage{amssymb,amsmath,amsthm,mathrsfs}
\usepackage{enumitem}
\usepackage{float}
\usepackage{xcolor}
\usepackage{tikz}
\usepackage{fancyvrb}
\usepackage{lipsum}
\usepackage{lmodern}
\usepackage{tcolorbox}
\usepackage{float}
\usepackage[title]{appendix}

\numberwithin{equation}{subsection}
\theoremstyle{plain}
\newtheoremstyle{case}{}{}{}{}{}{:}{ }{}

\newtheorem{thm}{Theorem}[section]
\newtheorem*{thm*}{Theorem}

\newtheorem{prop}[thm]{Proposition}
\newtheorem{cor}[thm]{Corollary}
\newtheorem{lem}[thm]{Lemma}
\newtheorem{remark}[thm]{Remark}

\def\C{{\mathbb{C}}}
\def\Z{{\mathbb{Z}}}

\def\R{{\mathbb{R}}}

\def\Q{{\mathbb{Q}}}

\def\lcm{{\text{\rm{lcm}}}}

\def\lmd{\,\middle|\,}

\newcommand\prt[1]{\left(#1\right)}

\newcommand\SL[1]{{\text{\rm{SL}}}\left(#1\right)}

\newcommand\lset[1]{\left\{#1\right\}}

\newcommand\labs[1]{\left|#1\right|}
\newcommand\lbrkt[1]{\left[#1\right]}

\usepackage{thmtools}
\usepackage{thm-restate}

\usepackage{hyperref}

\usepackage{cleveref}

\title{Translation Surfaces arising from Right Regular Prisms}
\author{Xun Gong}
\address{Department of Mathematics, The Ohio State University\\ 100 Math Tower, 231 W 18th Ave, Columbus, OH 43210, USA}
\email{gong.691@buckeyemail.osu.edu}

\author{Zuo Lin}
\address{Department of Mathematics, University of California, Berkeley\\ 970 Evans Hall, Berkeley, CA 94720, USA}
\email{zuo\_lin@berkeley.edu}

\author{Anthony Sanchez}
\address{Department of Mathematics, University of Houston\\ Philip Guthrie Hoffman Hall
3551 Cullen Blvd, Room 641
Houston, Texas 77204, USA}
\email{asanchez109@uh.edu}
\begin{document}

\maketitle

%\tableofcontents
\begin{abstract}
%We consider a right regular $n$-prism as an $n$-differential. We find the unfolding and the primitive surface the unfolding covers. We show the base is hyperelliptic and not a lattice surface. Combining these properties with the classification of Apisa \cite{MR3769676} of hyperelliptic components of strata we compute the  $\text{GL}(2,\mathbb R)$ orbit closure of the base. The orbit closure is used to get exact asymptotics on the base surface, the unfolding, and the right regular $n$-prism.

We study flat metrics arising from right regular $n$-prisms by viewing them as $n$-differentials and analyzing their associated unfoldings. We show that the unfolding of a right regular $n$-prism is never a lattice surface unless $n=4$, in contrast with the case of Platonic solids. Despite this, we prove that these surfaces admit translation coverings to hyperelliptic surfaces, allowing us to determine their $\mathrm{GL}(2,\mathbb{R})$-orbit closures using the classification of hyperelliptic components of strata.

As a consequence, we obtain exact quadratic asymptotics for a certain average of the number of saddle connections on the base surfaces, their unfoldings, and the original prisms, including their Siegel–Veech constants. This provides a natural infinite family of non-lattice surfaces for which orbit closures and counting problems can be computed explicitly.
\end{abstract}

\section{Introduction}
Unfoldings of polyhedral surfaces provide a rich source of translation surfaces whose geometry and dynamics reflect the underlying geometric structures. While unfoldings of Platonic solids give rise to lattice surfaces (Athreya--Aulicino--Hooper \cite{MR4477409}), much less is understood for other families. In this paper, we study \emph{right regular $n$-prisms}, which form a natural infinite family where the lattice property fails for the unfolding, but sufficient structure remains to analyze orbit closures and counting problems.

More precisely, we consider the class of polyhedra given by right regular $n$-prisms of height 1 that we denote by $P_n$ for $n\in\Z_{\geq 3}$. The flat metric on the surface of $P_n$ is given by an $n$-differential. We always orient the net, also denoted by $P_n$, so that a bottom edge is horizontal. Right regular $n$-prisms, thought of as singular flat 3-manifolds, were also considered in Athreya--B\'edaride--Hooper--Hubert \cite{Athreyaprisms}, where ergodicity of linear flow was studied.

We will associate to each $n$-prism a geometric object called a \emph{translation surface} through the process of \emph{unfolding}. In short, a translation surface is a polygon in the plane with edge identifications given by translations, and unfolding is a procedure that allows us to turn trajectories on $P_n$ into straight lines on a surface. Interesting information about the underlying $n$-differential can be discovered in the associated translation surface, where more geometric, topological, and dynamical structures are present. 

 As an example, the unfolding reveals information about the saddle connections on the $n$-differential. By a saddle connection, we mean geodesic paths with respect to the flat metric, between cone points with no other cone points on the interior. See Figure~\ref{TildeP5} for an example of a saddle connection on the  right regular pentagonal prism. Saddle connections can be understood by considering the unfolding of the $n$-differential on a right regular prism and understanding the orbit closure in the moduli space of translation surfaces. This information allows one to count the number of saddle connections on the $n$-differential by reducing to counting saddle connections on the unfolding. More generally, given a $k$-differential $X$, we consider the counting function
 $$N(X,L)=\#\{\text{saddle connections on $X$ of length  }\le L\}.$$

For translation surfaces, which correspond to $k=1$, the influential work of Eskin--Masur \cite{MR1827113} proved that almost every translation surface has exact quadratic asymptotics. That is,
$$\lim_{L\to\infty}\frac{N(X,L)}{\pi L^2}$$
 converges to a constant called the  \emph{Siegel-Veech constant}
(re-normalized by the area of the surface). Later, the breakthrough work of Eskin--Mirzakhani--Mohammadi \cite{MR3418528} proved an everywhere result for an average over the counting function. That is, for every translation surface $X$, we have
\begin{equation}\label{eqn: EMM}
\lim_{L\to\infty}\frac{1}{L}\int_0^LN(X,e^t)e^{-2t}dt
\end{equation}
exists and equals the Siegel-Veech constant
(re-normalized by the area of the surface) associated to the minimal orbit closure containing $X$.

By utilizing the unfolding of $n$-differentials on right regular prisms and the deep literature on translation surfaces, we can compute the Siegel-Veech constant. See also Aygun \cite{Kdiff} and the references therein where weak asymptotics of saddle connections for some $k$-differentials on higher genus surfaces are studied. For other examples of the unfolding revealing information about the underlying differentials, see, Athreya--Aulicino--Hooper \cite{MR4477409}, Athreya--Lee \cite{MR4238630}, Athreya--Aulicino \cite{MR3910631},  Gong--Sanchez \cite{MR4689114}, and Guti\'errez-Romo--Lee--Sanchez \cite{GLS}.

Using  how  the unfolding is situated in the moduli space of translation surfaces as a way  to gain information about the underlying differential is also present in the work of Athreya--Aulicino--Hooper \cite{MR4477409} where their interest was in the existence of closed saddle connections on the Platonic solids. 

Our work utilizes the unfolding for an infinite family of natural geometric bodies given by the right regular prisms. In contrast to Athreya--Aulicino--Hooper \cite{MR4477409} where their differentials were tiled by the same regular polygon, our surfaces are tiled by two regular polygons. Notably, this seemingly small difference results in the unfolding of a right regular prism never being a lattice surface (unless $n=4)$ which is in sharp contrast to the case of platonic solids where the unfoldings are always lattice surfaces. Despite the fact that the unfoldings are not lattice surfaces, we show that there is sufficient structure such as hyperellipticity in a translation covering of the unfolding that we can appeal to the classification of Apisa \cite{MR3769676} of hyperelliptic components of strata to compute the orbit closure.

Let us denote the unfolding of $P_n$ by $\tilde P_n$. The following theorem that computes the combinatorial cone data of $\tilde P_n$ and shows there is a translation covering to a lower genus surface was first discovered by David Aulicino, Alisa Leshchenko, and Eric Loucks. See Section \ref{Preliminaries} for formal definitions of these notions and Figure~\ref{TildeP5} for examples of the surfaces that appear in this article. 

\begin{thm}\label{thm:UnfoldingsResult}
    The surface $\tilde{P}_n$ has the genus $(n-1)^2$ and lies in the stratum $\mathcal H\left((n-2)^{2n}\right)$. The surfaces $\tilde{P}_n$ are never primitive. 
\end{thm}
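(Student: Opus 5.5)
The plan is to compute the cone data directly from the flat structure of $P_n$, and then to get non-primitivity from a symmetry of the prism.

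\emph{Cone data.} Write $P_n$ as a flat sphere with $2n$ cone points, namely the vertices of the prism. At each vertex one regular $n$-gon with interior angle $(n-2)\pi/n$ and two squares meet. So the cone angle is
$$\theta=\frac{(n-2)\pi}{n}+\pi=\frac{2\pi(n-1)}{n}.$$
Every other point of $P_n$ is flat: face interiors, edge interiors, and the centers of the polygons. Since all edges of the net are parallel to directions in $\frac{\pi}{n}\mathbb Z$, the flat structure has rotational holonomy in $\frac{2\pi}{n}\mathbb Z$, so it is an $n$-differential.

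The unfolding $\tilde P_n$ is the canonical degree-$n$ cyclic cover that trivializes this holonomy. It is connected, because a loop around a single vertex already has holonomy $2\pi(n-1)/n$. This generates $\mathbb Z/n$, since $\gcd(n-1,n)=1$.

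Over a vertex, a lifted cone angle must be a multiple of $2\pi$. The smallest $k$ with $k\theta\in 2\pi\mathbb Z$ is $k=n$, again because $\gcd(n-1,n)=1$. Hence each vertex is totally ramified: it has a single preimage with cone angle $n\theta=2\pi(n-1)$, which is a zero of order $n-2$. Flat points lift to regular points. This gives the stratum $\mathcal H\big((n-2)^{2n}\big)$. The genus then follows from $2g-2=2n(n-2)$, that is $g=(n-1)^2$.

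\emph{Non-primitivity.} I would use the rotation $\rho$ of the prism by $2\pi/n$ about its axis. It is an orientation-preserving isometry of the flat metric. In the natural charts its derivative is a rotation by $\pm 2\pi/n$ on the top and bottom faces and a translation on the square faces. In each case the derivative is trivial modulo $\frac{2\pi}{n}$, so $\rho$ preserves the $n$-differential.

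By the universal property of the canonical cover, $\rho$ lifts to $\tilde P_n$. After composing with a suitable deck transformation, the lift is a translation automorphism $\tilde\rho$; here one uses that deck transformations act with derivative the rotation by $2\pi/n$.

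The step I expect to need the most care is showing that $\tilde\rho$ is a nontrivial translation automorphism. Nontriviality is clear, since $\tilde\rho$ covers $\rho\neq \mathrm{id}$. What needs checking is that the lift can be chosen with derivative exactly the identity everywhere, not merely on one chart. I would check this chart by chart in the explicit unfolded net: label the $n$ rotated copies of $P_n$ and track where $\rho$ sends each copy.

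Given such a $\tilde\rho$, the quotient $\tilde P_n\to \tilde P_n/\langle\tilde\rho\rangle$ is a translation covering of degree $d>1$. Because $g=(n-1)^2\ge 4$, Riemann--Hurwitz for this covering forces the quotient to have genus strictly smaller than $(n-1)^2$. Therefore $\tilde P_n$ is not primitive.
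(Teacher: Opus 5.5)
Your proposal is correct. The cone-data part is the same computation that underlies the paper's appeal to Lemma 2.5 of Athreya--Aulicino--Hooper: each vertex is a simple pole and is totally ramified in the canonical cover. One small fix is needed there. Edge directions in $\frac{\pi}{n}\mathbb Z$ would only give holonomy in $\frac{\pi}{n}\mathbb Z$. The correct reason for holonomy in $\frac{2\pi}{n}\mathbb Z$ is the one you use in the next sentence: the holonomy of the punctured sphere is generated by the vertex loops, and each rotates by $2\pi/n$.

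For non-primitivity you take a genuinely different route. The paper works inside the explicit unfolding. Using $(i_{out}^+,k)\sim(i_{out}^-,k+2i)$, it reattaches the base of copy $i$ to petal $i$ of copy $-i$. This produces $n$ translated copies of the net, from which it reads off the degree-$n$ cover $\tilde P_n\to\Pi_n$ with the gluings (C-1), (C-2). You instead lift the rotational symmetry of the prism.

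The step you flag as delicate is automatic. For any lift, $\tilde\rho^*\tilde\omega=c\,\tilde\omega$ with $c^n=1$. This $c$ is locally constant off the finitely many zeros, hence constant, so one chart decides. A power of the deck transformation then makes $c=1$.

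Your conclusion can also be sharpened. For $0<k<n$, $\rho^k$ fixes no vertex, and a nontrivial translation automorphism can only fix zeros. So $\langle\tilde\rho\rangle\cong\mathbb Z/n$ acts freely, and Riemann--Hurwitz gives a quotient of genus exactly $n-1$ in $\mathcal H(n-2,n-2)$. In fact $\tilde\rho$ shifts the centers and the bases of the rotated copies in opposite directions. It therefore permutes the paper's reassembled nets (center and petals of copy $-i$ together with the base of copy $i$), so your quotient is exactly $\Pi_n$.

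Your approach explains the cover conceptually, as a symmetry preserving the $n$-differential. It transfers to other symmetric polyhedra without edge-label bookkeeping. The paper's cut-and-paste gives the concrete polygon model of $\Pi_n$, which the later arguments (hyperellipticity, cylinder moduli, primitivity) actually use.
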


Note that we are using the shorthand exponential notation for strata so that $\mathcal H\left((n-2)^{2n}\right)$ means there are $2n$ cone points of order $n-2$.

We continue the study of right regular prisms and show that the unfoldings are not lattice surfaces unless $n=4$. This is in contrast to Athreya--Aulicino--Hooper \cite{MR4477409} where the unfoldings of the platonic solids are always lattice surfaces.

\begin{thm}\label{thm:UnfoldingsVeech}
 The unfoldings $\tilde{P}_n$ are lattice surfaces if and only if $n=4.$ In this case, $\tilde{P}_4$ is a square-tiled surface.
\end{thm}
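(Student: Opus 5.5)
For $n=4$ I would argue directly from the net. The surface of $P_4$ is the unit cube, so it is tiled by unit squares. The holonomy group of the $4$-differential lies in the rotations by multiples of $\pi/2$, and the unfolding $\tilde P_4$ is obtained by gluing rotated copies of these squares along edges by translations. Hence $\tilde P_4$ is a finite cover of the flat torus $\R^2/\Z^2$ branched over one point, that is, a square-tiled surface. By Gutkin--Judge, square-tiled surfaces are lattice surfaces: their Veech group is commensurable to $\SL{2,\Z}$. Theorem~\ref{thm:UnfoldingsResult} gives the combinatorics as a check, namely $\tilde P_4\in\mathcal H(2^8)$ of genus $9$.

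For $n\neq 4$ I would show that $\tilde P_n$ has a cylinder direction that is not parabolic, by computing the moduli of the cylinders in some direction. Veech's dichotomy says that in a lattice surface every saddle connection direction is completely periodic and parabolic. In particular, the moduli of the cylinders in such a direction must be pairwise rationally commensurable. The natural choice is the vertical direction, with the net oriented so that a bottom edge is horizontal.

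In the vertical direction the $n$ unit squares forming the sides of the prism line up with the vertical edges of the regular $n$-gons at the top and bottom. On the unfolding, vertical flow through a square continues into a copy of an $n$-gon, traverses it, enters another square, and so on. I would enumerate the cylinders concretely in $\tilde P_n$, using the copies of the polygon rotated by multiples of $2\pi/n$. At least one family of cylinders should pass through only the squares and the strips of the $n$-gons adjacent to them. There should also be cylinders of a different type whose circumference involves the $n$-gon widths, which are sums of $\sin(2\pi k/n)$ and the like. The moduli then have the form
\[
\text{(rational)}\cdot\frac{1}{1+\cot(\pi/n)\cdot(\text{rational})}
\]
or ratios of trigonometric expressions in $\zeta_n$. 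I would show that some ratio of two moduli is irrational unless $n=4$. The tool is Niven-type facts: numbers such as $\cot(\pi/n)$ and $\tan(\pi/n)$ are rational only for $n=4$, and more generally I would use the degree of the relevant element of $\Q(\zeta_n)^+$.

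For $n=3$ and $n=6$ I would do the computation by hand, since there $\sqrt3$ appears. Alternatively, these cases follow from the same cotangent formula because $\cot(\pi/3)=1/\sqrt3$ is irrational. I could also use the horizontal direction if that is cleaner.

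The main obstacle is the combinatorial bookkeeping: correctly identifying the vertical cylinders of $\tilde P_n$, including their heights and circumferences, and then verifying that the resulting moduli ratio is a genuinely irrational algebraic number for all $n\neq4$. If the vertical direction happens to give commensurable moduli, a fallback is to show that the square cylinder and a cylinder through a single $n$-gon diagonal have incommensurable moduli. Another fallback is to exhibit a trace field of degree larger than the genus allows. Yet another is to use the covering $\tilde P_n\to$ (base surface) from Theorem~\ref{thm:UnfoldingsResult}: the lattice property passes between covering and base, so I could find a non-parabolic periodic direction on the smaller base surface instead.
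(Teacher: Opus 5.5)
Your treatment of $n=4$ is fine. For $n\neq 4$, however, the proposal stops exactly where the proof has to start. The argument consists of exhibiting, for every $n\neq 4$, two parallel cylinders whose moduli have irrational ratio, and you never identify a single cylinder. You say the vertical cylinders ``should'' be of two types, guess that the moduli look like $(\text{rational})/(1+(\text{rational})\cot(\pi/n))$, and then name the bookkeeping as the main obstacle, with three fallbacks.

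The geometric picture behind the guess is also off. With a bottom edge horizontal, only a few petals are axis-parallel: the stem, for even $n$ the opposite petal, and for $4\mid n$ the two side petals. The remaining petals are rotated from the stem by multiples of $2\pi/n$. A vertical trajectory therefore crosses most petals obliquely and wraps around them through the glued inner edges. Nothing you wrote determines the cylinder decomposition of the vertical direction or the form of its moduli. The vertical direction is not wrong in principle, but as it stands nothing is proved. You also omit the check that a candidate strip is a genuine cylinder, i.e.\ that it contains no cone point; in the paper this amounts to an inequality such as $\sin(\pi/n)<\cos(\pi/n)$.

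The arithmetic input you propose is also too weak. Irrationality of $\cot(\pi/n)$ only handles a modulus that is affine or M\"obius in $\cot(\pi/n)$. The cylinders one actually finds have reciprocal moduli
$1+\cot(\pi/n)$ for $4\mid n$,
$2\bigl(1+\cot(\pi/n)+\cot^2(\pi/n)\bigr)$ for $n\equiv 2\pmod 4$, and
$2\bigl(\cot(\tfrac{\pi}{2n})+\tfrac{1}{2\sin^2(\pi/2n)}\bigr)$ for odd $n$.
Excluding rationality of the last two requires $[\mathbb{Q}(\tan(\pi/n)):\mathbb{Q}]>2$ outside finitely many $n$, resp.\ the formula $[\mathbb{Q}(\tan(\pi/m)):\mathbb{Q}]=\phi(\lcm(m,4))/2$ with $m=2n$. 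You also need hand checks for $n=3$ and $n=6$.

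The route that makes this manageable is the one you list last. Regluing the $n$ rotated copies of the net gives a degree-$n$ translation cover $\tilde P_n\to\Pi_n$, and for even $n$ a further degree-$2$ cover $\Pi_n\to\Pi_n^h$. By Gutkin--Judge it therefore suffices to work on $\Pi_n$ or $\Pi_n^h$. There the two inner edges of each petal are identified, so the stem is a horizontal cylinder of modulus $1$ (similarly in $\Pi_n^h$). Only one more horizontal cylinder, of irrational modulus, then has to be found, chosen separately for $4\mid n$, $n\equiv 2\pmod 4$, and odd $n$.
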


Both of these results follow by constructing translation surfaces $\Pi_n$ that have the same underlying polygon as $P_n$, but with different identifications. Depending on the parity of $n$, we can find the primitive cover and prove hyperellipticity which allows us to use the classification of Apisa \cite{MR3769676} of hyperelliptic components of strata to compute the orbit closure. This provides a natural example of a non-lattice family of translation surfaces for which orbit closures and Siegel–Veech asymptotics can still be computed explicitly.

\begin{thm} \label{thm:main}

    \begin{enumerate}
        \item If $n\ge3$ is even and $n\ne4$, there is a degree $2n$ translation cover $\tilde P_n\to \Pi_n ^h$ to a non-lattice primitive hyperelliptic surface $\Pi_n ^h\in \mathcal H(n-2)$. Furthermore, the $\text{GL}(2,\mathbb R)$ orbit closure of $\Pi_n ^h$ is exactly the set of hyperelliptic translation surfaces in $\mathcal H(n-2)$. As such, we have asymptotics for the number of saddle connections in a ball. That is, there is a constant $c=c(\mathcal H(n-2))>0$ such that 
        $$\lim_{L\to\infty}\frac{1}{\pi^2L}\int_0^LN(\Pi^h _n,e^t)e^{-2t}dt=\frac{c}{\text{Area}(\Pi_n^h)}.$$
             %%%%%%%%%%%%%%%%%%%%%%%%%%%%%%%%%%%%%%%%%%%%%%%%%%%%%%%%
        \item If $n\ge3$ is odd, there is a degree $n$ translation cover $\tilde P_n\to \Pi_n$ to a non-lattice primitive hyperelliptic surface $\Pi_n\in \mathcal H(n-2,n-2)$. Furthermore, the $\text{GL}(2,\mathbb R)$ orbit closure of $\Pi_n$ is exactly the set of hyperelliptic translation surfaces in $\mathcal H(n-2,n-2)$. As such, we have asymptotics for the number of saddle connections in a ball. That is, there is a constant $d=d(\mathcal H(n-2,n-2))>0$ such that 
        $$\lim_{L\to\infty}\frac{1}{\pi^2L}\int_0^LN(\Pi_n,e^t)e^{-2t}dt=\frac{d}{\text{Area}(\Pi_n)}.$$
    \end{enumerate}
\end{thm}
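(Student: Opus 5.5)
The plan is to build everything from the surface $\Pi_n$: the same polygon as the net $P_n$ (one strip of $n$ unit squares with a regular $n$-gon glued on top and one on the bottom), but with every edge identified with a parallel edge by a translation. The order of steps is: construct $\Pi_n$, build the cover from $\tilde P_n$, compute the stratum, prove primitivity and hyperellipticity, apply Apisa's classification, and finally invoke Eskin--Mirzakhani--Mohammadi.

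\textbf{Construction of the base surface.} For odd $n$, a regular $n$-gon has no pairs of parallel sides. We instead pair each edge of the top $n$-gon with the parallel edge of the bottom $n$-gon, which is rotated by $\pi$. The vertical sides of the square strip are glued to each other. Each edge of $P_n$ corresponds to a rotation by a multiple of $2\pi/n$. Composing the holonomy of $P_n$ with these rotations, we get a map from the unfolding $\tilde P_n$ (the $n$ rotated copies of $P_n$) onto $\Pi_n$ that is a translation on each copy. This is a degree $n$ translation cover, and the cone data is then computed by Riemann--Hurwitz.

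A vertex count shows $\Pi_n$ has two cone points, each of angle $2\pi(n-1)$, so $\Pi_n\in\mathcal H(n-2,n-2)$ and has genus $n-1$. This is consistent with $\tilde P_n$ having genus $(n-1)^2$: the $2n$ cone points of order $n-2$ map $n$-to-$1$ onto the two points, so the cover is unramified. Indeed $2(n-1)^2-2 = n(2(n-1)-2)$.

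For even $n$, the $n$-gons have opposite sides parallel, so we can further quotient by a symmetry (the rotation by $\pi$ combined with the top/bottom swap). This gives a surface $\Pi_n^h$ with a single cone point in $\mathcal H(n-2)$ and an intermediate cover of total degree $2n$. Again we check the degree by Euler characteristic: $2(n-1)^2-2 = 2n(n-2)$.

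\textbf{Hyperellipticity.} We exhibit an involution with derivative $-I$, namely the rotation by $\pi$ about the center of the square strip, which swaps the two $n$-gons. We then count its fixed points: the centers of the squares, the midpoints of the glued edges, and (for even $n$) the cone point. The count should come out to $2g+2$, which is $2n$ in the odd case and $n$ in the even case. This shows the surface lies in the hyperelliptic locus, in fact in the hyperelliptic component $\mathcal H^{hyp}$.

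\textbf{Primitivity and non-lattice property.} We compute the trace field from the cylinder decomposition in the horizontal direction. The heights involve $\sin(2\pi k/n)$ from the $n$-gons and $1$ from the squares. The moduli of the horizontal cylinders are then incommensurable for $n\ne 4$ (and for the even case, $n\ne 4,6$ needs a check). By Veech's dichotomy, a lattice surface has commensurable moduli in every parabolic direction, so the surface is not a lattice surface. Primitivity follows in two ways:
\begin{itemize}
\item genus considerations, since a proper translation cover of a torus would force the absolute periods to be commensurable (a square-tiled surface), which fails because the periods include irrational cosines;
\item a lower-genus hyperelliptic base would give the same cone structure, ruled out by degree and Riemann--Hurwitz arithmetic.
\end{itemize}

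\textbf{Orbit closure.} Apisa's theorem says that in a hyperelliptic component $\mathcal H^{hyp}(2g-2)$ or $\mathcal H^{hyp}(g-1,g-1)$, every $\mathrm{GL}(2,\R)$ orbit closure of rank at least two is the whole component (or a locus of branched covers). The remaining possibilities are Teichmüller curves or loci of covers of lower genus. Non-lattice excludes closed orbits, and primitivity excludes covers, so the orbit closure is the full hyperelliptic component. We then apply the theorem of Eskin--Mirzakhani--Mohammadi (equation \eqref{eqn: EMM}) to the affine invariant manifold $\mathcal H^{hyp}$. This gives the averaged counting limit, equal to the Siegel--Veech constant of the component normalized by area, which yields the constants $c$ and $d$.

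\textbf{Main obstacle.} I expect the main obstacle to be the non-lattice claim: computing cylinder moduli explicitly for all $n$, and confirming rank greater than one, so that Apisa's classification leaves no intermediate option. Small cases such as $n=6$ may need to be checked separately.
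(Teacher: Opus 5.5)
Your proposal has two genuine gaps. The covering $\tilde P_n\to\Pi_n$ is not actually constructed (and cannot exist for your surface), and the primitivity argument does not work. The non-lattice step is also left undone.

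\textbf{The covering.} You build $\Pi_n$ on the strip net, but the surface in the statement lives on the paper's net: a center $n$-gon with a unit-square petal on every edge, and the base attached to one petal (the stem). That choice is essential. On the strip net your gluing is incomplete, since the $2(n-1)$ free horizontal edges of the squares are never paired. Also, ``a translation on each copy'' cannot hold, because $P_{n,k}$ is the net rotated by $2\pi k/n$.

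No completion of your gluing can work for odd $n\ge5$. The vertical edges of the prism lift to unit-length saddle connections of $\tilde P_n$ in every direction $\pi/2+2\pi k/n$. A translation cover sending zeros to zeros, as you assert, maps them to unit saddle connections of $\Pi_n$ in the same directions. But in a polygon made of axis-parallel unit squares and unit-sided regular $n$-gons, a unit segment leaving a vertex in a non-edge direction ends in the interior of the face it enters. For odd $n$ and $k\ne0$ these directions are never edge directions, so no such saddle connection exists.

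The paper's cover exists because rotation by $2\pi/n$ permutes the petals. After pasting $(i^-_{out},i)$ to $(i^+_{out},-i)$, $\tilde P_n$ becomes $n$ translates of one net, and the identifications (C-1), (C-2) descend. This makes every petal a unit cylinder in direction $2\pi k/n$. The horizontal one, the unit-square stem, anchors both the non-lattice and the primitivity arguments.

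There are two further problems in the even case. The map $\Pi_n\to\Pi_n^h$ must be a quotient by a translation automorphism (carrying the center onto the base, which is a translate of it for even $n$), not by a rotation by $\pi$. And for $\Pi_n^h$ of genus $n/2$ you need $2g+2=n+2$ fixed points, not $n$.

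\textbf{Primitivity.} Riemann--Hurwitz does not exclude proper covers. For $n=7$, consider a degree-$2$ cover $\Pi_7\to Y\in\mathcal H(2,2)$ with each zero doubly ramified, or a degree-$3$ cover of some $Y\in\mathcal H(1,1)$ totally ramified at both zeros. Both are arithmetically consistent, since $10=2\cdot4+1+1=3\cdot2+2+2$. Your torus bullet is also inaccurate: a torus cover need not be square-tiled, only have its absolute periods in some rank-two lattice.

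The paper's argument runs as follows. Torus covers are excluded via the non-lattice property, and Theorem~\ref{thm:Moller} gives the unique primitive quotient $p$. The hyperelliptic involution acts transitively on zeros, so Hooper's criterion shows $p$ is balanced. Hence, for the stem square $P$, the set $p^{-1}(p(P))$ consists of $d$ translated unit squares bounded by saddle connections. Counting the two (three when $4\mid n$) horizontal unit saddle connections then forces $d=1$.

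\textbf{Non-lattice.} The step you defer is where most of the concrete work lies. One needs explicit horizontal cylinders with reciprocal moduli $1+\cot\frac{\pi}{n}$, $2\bigl(1+\cot\frac{\pi}{n}+\cot^2\frac{\pi}{n}\bigr)$, or $2\bigl(\cot\frac{\pi}{2n}+\frac{1}{2\sin^2(\pi/2n)}\bigr)$. These are compared with the modulus-one stem using the degree bounds of Lemma~\ref{IrrationalityLemma}. Knowing only that heights ``involve $\sin(2\pi k/n)$'' gives no incommensurability.
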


The constant $c$ (resp. $d$) appearing above coincides with the Siegel-Veech constant appearing in Eskin--Masur--Zorich \cite{MR2010740} for the stratum $\mathcal H(n-2)$ (resp. $\mathcal H(n-2,n-2)$). Asymptotics when $n=4$ is work of Athreya--Lee \cite{MR4238630} as $\tilde P_4$ corresponds to the platonic solid given by the cube.

As a corollary of the above and using properties specific to the covers in our situation, we are able to show the following.

\begin{cor}\label{cor:asymforPn}
    For $n\ge3$ even and $n\ne4$, we have
            $$\lim_{L\to\infty}\frac{1}{\pi^2L}\int_0^LN(P_n,e^t)e^{-2t}dt=\frac{4c}{\text{Area}(P_n)}$$
            where $c$ is the same constant appearing in Theorem \ref{thm:main}.
            
    For $n\ge3$ and odd, we have
            $$\lim_{L\to\infty}\frac{1}{\pi^2L}\int_0^LN(P_n,e^t)e^{-2t}dt=\frac{d}{\text{Area}(P_n)}$$
             where $d$ is the same constant appearing in Theorem \ref{thm:main}.

\end{cor}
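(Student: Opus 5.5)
The plan is to pass from $P_n$ to its unfolding $\tilde P_n$, and then from $\tilde P_n$ to the base surface ($\Pi_n^h$ or $\Pi_n$). Both passages should be bookkeeping on saddle connections and areas, after which Theorem \ref{thm:main} supplies the limit.

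\textbf{Step 1: from $P_n$ to $\tilde P_n$.} The unfolding $\pi:\tilde P_n\to P_n$ is a branched cover of degree $n$, since the holonomy of the $n$-differential lies in the rotations by multiples of $2\pi/n$. Every cone point of $\tilde P_n$ lies over a cone point of $P_n$. A saddle connection $\gamma$ on $P_n$ of length $\ell$ therefore lifts to exactly $n$ saddle connections on $\tilde P_n$ of the same length, one per sheet, and they are distinct. Conversely, every saddle connection on $\tilde P_n$ projects to one on $P_n$; for this we must check that the preimages of cone points of $P_n$ are exactly the cone points of $\tilde P_n$, i.e.\ that no cone point of $P_n$ becomes regular upstairs. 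This follows from the stratum computation in Theorem \ref{thm:UnfoldingsResult}: the $2n$ vertices of the prism each have angle $\frac{2\pi}{n}\cdot(n-1)$, i.e.\ total angle $2\pi(n-1)\cdot\frac{1}{1}$ after unfolding, matching order $n-2$. Hence
\[
N(\tilde P_n,L)=n\,N(P_n,L), \qquad \mathrm{Area}(\tilde P_n)=n\,\mathrm{Area}(P_n).
\]

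\textbf{Step 2: from $\tilde P_n$ to the base $B$.} Let $p:\tilde P_n\to B$ be the translation cover of degree $m$ from Theorem \ref{thm:main}, with $m=2n$ and $B=\Pi_n^h$ for even $n$, and $m=n$ and $B=\Pi_n$ for odd $n$. Here I use the explicit structure of the covers: they should be unbranched over the regular points of $B$, and every cone point of $\tilde P_n$ should map to a marked or cone point of $B$. In the even case the $2n$ points of order $n-2$ map to the single zero of $\Pi_n^h\in\mathcal H(n-2)$, with $2n$ preimages, so $p$ is unramified there. In the odd case each of the two zeros has $n$ preimages. So $p$ is an unramified covering of surfaces with these point sets removed. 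Each saddle connection on $B$ then lifts to exactly $m$ saddle connections of the same length, and every saddle connection upstairs projects to one downstairs. This gives $N(\tilde P_n,L)=m\,N(B,L)$ and $\mathrm{Area}(\tilde P_n)=m\,\mathrm{Area}(B)$.

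\textbf{Step 3: combining.} Averaging and applying Theorem \ref{thm:main}, in the even case
\[
\lim_{L\to\infty}\frac{1}{\pi^2L}\int_0^LN(P_n,e^t)e^{-2t}\,dt=\frac{2n}{n}\cdot\frac{c}{\mathrm{Area}(\Pi_n^h)}=2\cdot\frac{c\cdot 2n}{n\,\mathrm{Area}(P_n)}=\frac{4c}{\mathrm{Area}(P_n)}.
\]
In the odd case the ratio $m/n$ is $1$, and the same computation gives $d/\mathrm{Area}(P_n)$.

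\textbf{Main obstacle.} The delicate point is verifying that neither cover introduces or loses cone points. Concretely, we need (i) that every cone point of $\tilde P_n$ maps to a genuine cone point of $B$, so that saddle connections project to saddle connections rather than to chains through regular points, and (ii) that no regular point of $B$ has a branch point above it. I would settle both by reading off the explicit identifications of $\Pi_n$ and $\Pi_n^h$ in terms of the net $P_n$ and using Riemann–Hurwitz. The genus $(n-1)^2$ from Theorem \ref{thm:UnfoldingsResult}, together with the genus of $B$, pins down the total ramification as exactly what the cone points account for.
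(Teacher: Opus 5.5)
Your proposal is correct and follows essentially the same route as the paper. Both arguments use $N(\tilde P_n,L)=nN(P_n,L)$, which holds because each pole of $P_n$ has a single cone-point preimage, and $N(\tilde P_n,L)=mN(B,L)$ for the degree-$m$ cover $\tilde P_n\to B$; both then convert areas via $\mathrm{Area}(\tilde P_n)=n\,\mathrm{Area}(P_n)=m\,\mathrm{Area}(B)$. Your Riemann--Hurwitz check, which gives total ramification $2(n-1)^2-2-m(2g(B)-2)=0$ in both parity cases, justifies more carefully the unramifiedness that the paper only asserts by calling the covers ``balanced''.
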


\begin{remark}
     In Athreya--Aulicino \cite{MR3910631}, they showed a closed saddle connection on the dodecahedron exists. Such a path does not exist on any other platonic solids see e.g. Athreya--Aulicino--Hooper \cite{MR4477409}. We remark that for $n\ge5$ and odd, there exists a closed saddle connection by considering the path going from bottom-most point of the net to the top-most. See Figure~\ref{TildeP5} for an example of this when $n=5$. It would be interesting to see if such a path exists on a right regular prism for even $n\ge5$.
\end{remark}

\begin{comment}
    \begin{remark}
Our techniques and results extend to the infinite family of regular \emph{antiprisms}. For brevity, our proofs are only written for prisms, but in the final section we make a remark indicating the modifications needed in our proofs.
\end{remark}
\end{comment}
  \begin{figure}
    \centering
    \includegraphics[scale=.65]{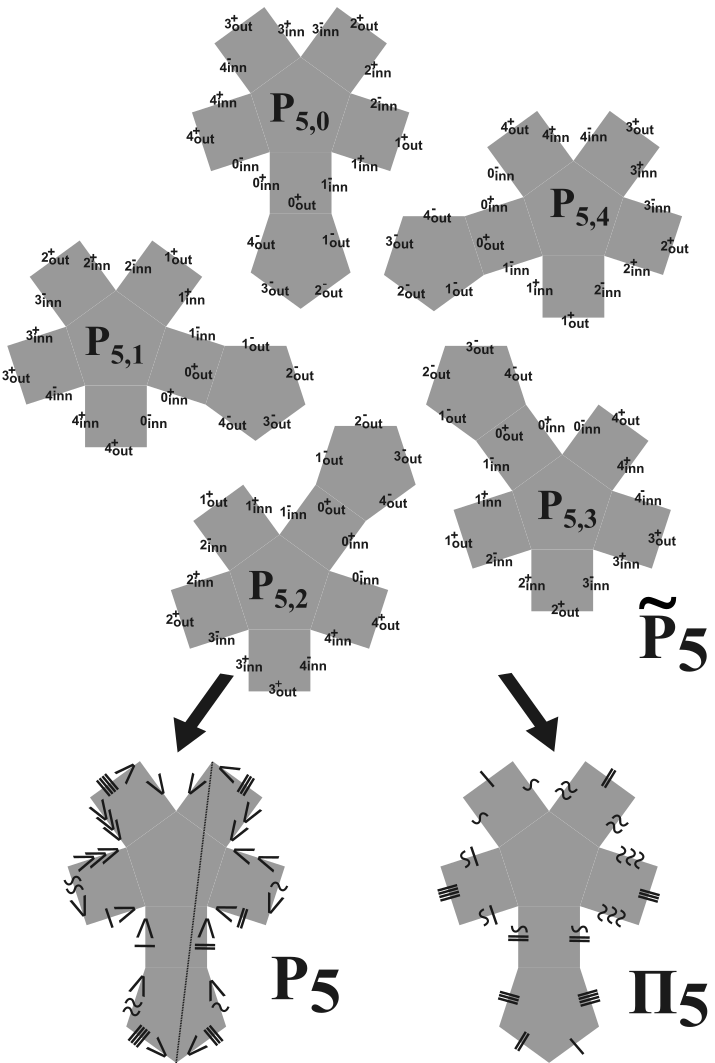}
    \caption{On the bottom left is the right regular 5-prism $P_5$ along with a closed saddle connection. At the top is the unfolding of $P_5$ which we denote as $\tilde P_5$. The identifications of $\tilde P_5$ can be found in Equation~\ref{eqn:identificationsof TildePn}. On the bottom right is $\Pi_5$ which, by Proposition \ref{CoveringMap}, has $\tilde P_5$ as a 5-fold cover.  } \label{TildeP5}
\end{figure}

\textbf{Acknowledgments.}  The authors would like to thank Paul Apisa, Jayadev Athreya, David Aulicino, and Pat Hooper for helpful conversations. Additionally, the authors want to thank Alex Rhee and Yi Yao who were part of a directed reading at UCSD where this project began.
At the start of this project, A.S. was supported by the National Science Foundation Postdoctoral Fellowship under grant number DMS-2103136.
%%%%%%%%%%%%%%%%%%%%%%%%%%%%%%%%%%%%%%%%%%%%%%%%%%%%%%%%%%%%%%%%%%%%%%%%%%%%%%%%%%%%%%%
\section{Preliminaries}\label{Preliminaries}

We briefly review the basic theory of $k$-differentials and translation surfaces to aid the reader. We refer to Athreya--Masur \cite{MR4783430} for more details. 

Let $k$ be a positive integer and $X$ be a Riemann surface. A meromorphic $k$-\emph{differential} on $X$ is a differential $\xi$ on $X$ which, in local coordinates, can be expressed in the form $\xi=f(z)(dz)^k$ where $f$ is meromorphic. A zero or pole of order $j>-k$ corresponds to a \emph{cone point} of angle $\frac{(j+k)2\pi}{k}.$ 

By the Riemann-Roch theorem, the sum of the orders of zeros and poles of $\xi$ of a $k$-differential $(X,\xi)$ is $k(2g-2)$ where $g$ denotes the genus of $X$. The space of genus $g$ $k$-differentials can be stratified by a partition $(m_1,\dots,m_s)$ of $k(2g-2)$ where $m_j>-k$ for all $j$. The moduli space of $k$-differentials with multiplicities of the zeros and poles given by $(m_1,\ldots,m_s)$ is denoted as $\mathcal H_k(m_1,\ldots,m_s)$.

When $k=1$, a $k$-differential is called a \emph{translation surface}. That is, a translation surface is a pair $(X,\omega)$ where $\omega$ is a holomorphic non-zero 1-form on $X.$ Occasionally, we suppress the one-form $\omega$ and denote the translation surface simply as $X$.  Translation surfaces can be defined geometrically as finite collections of polygons with sides identified by a translation. The total angle about each vertex of the polygon presentation of a translation surface is necessarily an integer multiple of $2\pi$. The vertices where the total angle is greater than $2\pi$ are called cone points and correspond to the zeros of the $1$-form. In fact, a zero of order $m$ is a cone point of total angle $2\pi(m+ 1)$. We denote the set of cone points of a translation surface $X$ by $\Sigma(X)$. Given an integer partition of $2g-2$, we denote the moduli space of translation surfaces with multiplicities of the zero given by $(m_1,\ldots,m_s)$  as $\mathcal H(m_1,\ldots,m_s)$ and remove the subscript $\mathcal H_1(\cdot)$ in this case.

Associated to a $k$-differential is a translation surface called the \emph{unfolding} or \emph{holonomy cover}. This surface is formed by unfolding the $k$-differential.

A translation surface inherits a flat metric from $\C$. A  \emph{saddle connection} on a translation surface is a straight line geodesic connecting two cone points with no cone points in the interior. A \emph{cylinder} on a translation surface is an isometrically embedded copy of a Euclidean cylinder $(\R/c\Z)\times(0, h)$ whose boundary is a union of saddle connections. The number $c$ is the \emph{circumference}, and the number $h$ is the \emph{height} of the cylinder. The ratio $h/c$ is the \emph{modulus} of the cylinder. 

Given a $k$-differential $(X,\xi)$, there is a notion of cylinder and saddle connections. A cylinder is the union of closed geodesics disjoint from singularities in the same isotopy class rel singularities. A geodesic between two singularities with no singularities in the interior is called a saddle connection. When $k>2$, cylinders and saddle connections may self-intersect, which complicates their study.

There is a natural $\text{SL}(2,\mathbb{R})$ action on translation surfaces coming from the linear action of matrices on $\mathbb{R}^2$. The \emph{Veech group} of a translation is the stabilizer of $(X,\omega)$ under the $\SL{2,\R}$-action: \begin{align*}
        \SL{X,\omega}=\lset{g\in\SL{2,\R}\lmd g(X,\omega)=(X,\omega)}.
    \end{align*} 
     We call a surface a \emph{lattice surface} if its Veech group is a lattice in $\SL{2,\R}$.

    Let $(X,\omega)$ and $(Y,\eta)$ be translation surfaces. A surjective holomorphic  map $p:(X,\omega)\to (Y,\eta)$ is a \emph{translation cover} if the pullback form $p^*(\omega)=\eta$. We call $(Y,\eta)$ the \emph{base surface} and $(X,\omega)$ the \emph{covering surface} of $p$. We call $(X,\omega)$ \emph{primitive} if it does not admit a non-trivial translation covering to a lower genus translation surface. 

    The existence of a primitive translation surface is guaranteed due to the following result of M\"oller.

    \begin{thm}\label{thm:Moller}(M\"oller \cite{MR2242629})
        Every translation surface covers a primitive translation surface. If the genus of the primitive surface is greater than 1, then this covering is unique.
    \end{thm}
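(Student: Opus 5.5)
This is a result quoted from M\"oller \cite{MR2242629}, and we would follow his strategy. We split it into existence and uniqueness.

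\emph{Existence.} Every translation surface $(X,\omega)$ has genus $g(X)\ge 1$, because $\omega$ is a nonzero holomorphic $1$-form. If $X$ is not primitive, by definition it admits a nontrivial translation cover $X\to Y_1$ with $g(Y_1)<g(X)$. If $Y_1$ is not primitive we repeat the step with $Y_1$, and so on. Compositions of translation covers are translation covers, since $(q\circ p)^*\eta=p^*q^*\eta$. The genus strictly decreases and is bounded below by $1$, so the chain stops after at most $g(X)-1$ steps, at a primitive surface $Y$ covered by $X$.

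\emph{Uniqueness when the genus is $>1$.} Let $p_1\colon X\to Y_1$ and $p_2\colon X\to Y_2$ be translation covers onto primitive surfaces, with $g(Y_1)>1$.

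\begin{enumerate}
\item \textbf{Common quotient.} Let $\sim$ be the equivalence relation on $X$ generated by $x\sim x'$ whenever $p_1(x)=p_1(x')$ or $p_2(x)=p_2(x')$. Away from the finite set of branch points, each $p_i$ is a local translation in the flat charts of $\omega$. Hence, locally, $\sim$ is generated by finitely many local translations compatible with $\omega$. If every class is finite, then $Z=X/\!\sim$ is a compact Riemann surface, and $\omega$ descends to a holomorphic $1$-form $\zeta$ on $Z$. Both $p_1$ and $p_2$ then factor as $X\to Y_i\to Z$ with each $Y_i\to Z$ a translation cover.
\item \textbf{Conclusion from primitivity.} Since $Y_1$ is primitive, $g(Z)=g(Y_1)>1$. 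Apply Riemann--Hurwitz to the degree $d$ cover $Y_1\to Z$:
$$2g(Y_1)-2\ \ge\ d\,(2g(Z)-2)=d\,(2g(Y_1)-2).$$
This forces $d=1$, so $Y_1\cong Z$. Consequently $g(Z)>1$ as well. Primitivity of $Y_2$ then gives $g(Y_2)=g(Z)>1$, and the same Riemann--Hurwitz inequality gives $Y_2\cong Z$. So $p_1$ and $p_2$ agree up to a translation isomorphism of the base.
\end{enumerate}

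\emph{Main obstacle: finiteness of the classes in step 1.} This is exactly where the hypothesis $g>1$ must enter, since for tori it fails. For instance, two incommensurable torus covers generate dense classes. My first attempt would be to work with the finite set $S_i=p_i^{-1}(\Sigma(Y_i))$, which is nonempty because $g(Y_i)>1$. The aim is to show that an elementary move $x\mapsto x'$ with $p_i(x)=p_i(x')$ preserves the multiset of holonomy vectors of straight segments from $x$ to $S_i$ of length at most $L$, for every $L$. Points sharing this data for all $L$ should form a discrete, hence finite, subset of $X$.

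The difficulty is that a move via $p_1$ preserves the data relative to $S_1$ but not relative to $S_2$. We therefore need a single finite set invariant under both relations. A natural candidate is the set of points whose holonomy to $\Sigma(X)$ lies in the relative period group $\mathrm{Per}(\omega)$. If $\mathrm{Per}(\omega)$ is not a lattice in $\C$, this set should be finite. If $\mathrm{Per}(\omega)$ is a lattice, $X$ is a torus cover. In that case we would run the whole argument algebraically, by comparing the two subfields $p_i^*\C(Y_i)\subset\C(X)$ and taking the subfield they generate, following M\"oller.
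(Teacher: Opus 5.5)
The paper does not prove this statement; it quotes it from M\"oller. So your proposal has to stand on its own, and it has a genuine gap in Step~1.

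Your existence argument and your Step~2 are fine: given a common translation quotient $Z$ of $Y_1$ and $Y_2$, primitivity plus Riemann--Hurwitz with $g>1$ does force $Y_1\cong Z\cong Y_2$. But producing $Z$ is the entire content of the uniqueness claim. You leave it as an ``obstacle'', and the candidates you offer do not work:

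\begin{enumerate}
\item \textbf{The period-group set.} In a flat chart, the set of points whose holonomy to $\Sigma(X)$ lies in $\mathrm{Per}(\omega)$ is the preimage of a translate of $\mathrm{Per}(\omega)$. So it is finite exactly when $\mathrm{Per}(\omega)$ is discrete, and infinite otherwise (dense in $X$ for generic $\omega$). That is the opposite of what you need. Its invariance under the relation is also unclear: $\Sigma(X)$ is not $p_i$-saturated, because a zero of $\omega$ can be a ramification point over a regular point of $Y_i$ whose other preimages are regular.
\item \textbf{The algebraic fallback.} It points the wrong way. The field generated by $p_1^*\C(Y_1)$ and $p_2^*\C(Y_2)$ belongs to a curve lying \emph{over} $Y_1$ and $Y_2$. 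A common quotient corresponds to $p_1^*\C(Y_1)\cap p_2^*\C(Y_2)$, and showing that this intersection is not $\C$ is the original problem.
\item \textbf{The lattice case and the role of $g>1$.} The lattice case is vacuous anyway. Write $\Lambda_\omega$ for the absolute periods; any quotient $(Y,\eta)$ has $(\deg p)\Lambda_\eta\subseteq\Lambda_\omega\subseteq\Lambda_\eta$. So if $\Lambda_\omega$ is discrete, $Y$ covers $\C/\Lambda_\eta$ and cannot be primitive of genus $>1$. The same inclusions show that two torus quotients $\C/\Lambda_1$ and $\C/\Lambda_2$ of one $X$ are commensurable. Their classes then lie in the finite fibres of $X\to\C/(\Lambda_1+\Lambda_2)$. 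So ``incommensurable torus covers'' do not occur, and $g>1$ is used only in Step~2.
\end{enumerate}

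The missing idea is to build $Z$ directly rather than prove finiteness through a flat-geometric invariant. The Jacobian does this cleanly.

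\begin{enumerate}
\item With $d_i=\deg p_i$, put $e_i=p_i^*\circ p_{i*}\in\mathrm{End}(\mathrm{Jac}\,X)$. Then $e_i^2=d_ie_i$, $e_i$ is Rosati-symmetric, and $e_i\circ\mathrm{AJ}_X=p_i^*\circ\mathrm{AJ}_{Y_i}\circ p_i$, where $\mathrm{AJ}$ denotes Abel--Jacobi maps.
\item Also $\int_{e_i\gamma}\omega=d_i\int_\gamma\omega$ for $\gamma\in H_1(X,\Z)$. So $\omega$ kills $\ker e_i$, which is the orthogonal complement of the image of $e_i$ for the intersection form.
\item Let $\epsilon\in\mathrm{End}_\Q(\mathrm{Jac}\,X)$ be the symmetric idempotent of the abelian subvariety $(p_1^*\mathrm{Jac}\,Y_1\cap p_2^*\mathrm{Jac}\,Y_2)^0$. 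Its kernel on $H_1(X,\Q)$ is $\ker e_1+\ker e_2$. Hence $\omega\circ\epsilon=\omega$, and since $\ker e_i\subseteq\ker\epsilon$, also $\epsilon=\epsilon e_i/d_i$.
\item For $N$ divisible enough, $\beta=N\epsilon\circ\mathrm{AJ}_X$ therefore factors through both $p_1$ and $p_2$. Moreover $\beta^*\xi=N\omega$, where $\xi$ is the invariant form with $\mathrm{AJ}_X^*\xi=\omega$.
\item The normalization $Z$ of $\beta(X)$, with the pullback of $\xi/N$, is then a common translation quotient of $Y_1$ and $Y_2$. The forms match because $p_i^*$ is injective on $1$-forms. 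Your Step~2 finishes the proof.
\end{enumerate}

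Note that this construction never uses $g>1$. That is consistent with your equivalence classes being finite for every pair of translation quotients.
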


    The following result of Gutkin--Judge \cite{GJ} connects the Veech group of translation surfaces that differ by a translation covering.

\begin{thm}\label{Thm:TranslationCovering} (Gutkin--Judge \cite{GJ})
    If $p:(X,\omega)\to (Y,\eta)$ is a translation covering, then their Veech groups are commensurable. That is, there is $g\in\mathrm{SL}(2,\mathbb R)$ such that $\mathrm{SL}(X,\omega)\cap g\mathrm{SL}(Y,\eta)g^{-1}$ has finite index in both $\mathrm{SL}(X,\omega)$ and $g\mathrm{SL}(Y,\eta)g^{-1}$.
\end{thm}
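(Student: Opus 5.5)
The plan is to route everything through affine automorphisms and the Veech group homomorphism $D\colon \mathrm{Aff}(X,\omega)\to \mathrm{SL}(2,\R)$, which has finite kernel. I will show that both $\mathrm{SL}(X,\omega)$ and $\mathrm{SL}(Y,\eta)$ contain a common finite-index subgroup, after conjugating by some $g$. This $g$ is only needed to normalize the period lattice when a torus appears.

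\textbf{Reduction to a primitive base.} By Theorem~\ref{thm:Moller}, $Y$ covers a primitive surface $Z$. So $X\to Y\to Z$, and it suffices to compare each of $X$ and $Y$ with $Z$, since commensurability (up to conjugation) is transitive.

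\textbf{The case $g(Z)>1$.} Let $q\colon X\to Z$ be the covering. For $\phi\in\mathrm{Aff}(X)$, the map $q\circ\phi$ is again a translation covering of $Z$, after twisting the translation structure of $Z$ by $D\phi$. By the uniqueness part of Theorem~\ref{thm:Moller}, $\phi$ descends to an affine map of $Z$ with the same derivative. Hence $\mathrm{SL}(X,\omega)\subseteq \mathrm{SL}(Z)$, and likewise $\mathrm{SL}(Y,\eta)\subseteq\mathrm{SL}(Z)$.

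For finite index, I run the argument in reverse. Let $B\subset Z$ be the finite branch locus of $q$, enlarged by $\Sigma(Z)$ and by the images of $\Sigma(X)$.
\begin{itemize}
\item An element $\psi\in \mathrm{Aff}(Z)$ lifts to $X$ exactly when it preserves $B$ and preserves the conjugacy class of the monodromy representation $\pi_1(Z\setminus B)\to S_{\deg q}$.
\item The subgroup of $\mathrm{Aff}(Z)$ preserving $B$ has finite index whenever $B$ consists of points with finite $\mathrm{Aff}(Z)$-orbits. For $Z$ primitive of genus $>1$, every point of $B$ either is a cone point or arises canonically from the unique covering, and I expect this to give the needed finiteness.
\item Given that, $\mathrm{Aff}(Z)$ permutes the finitely many degree-$\deg q$ covers with this branch data. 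So the stabilizer of the class of $q$ has finite index.
\end{itemize}
This shows $\mathrm{SL}(X)$ has finite index in $\mathrm{SL}(Z)$, and similarly for $Y$.

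\textbf{The case $g(Z)=1$.} Here $Z$ is a torus $\C/\Lambda$. I choose $g$ so that $g\Lambda=\Z^2$, which makes $\mathrm{SL}(Z)=\mathrm{SL}(2,\Z)$ after conjugation. The same lifting argument applies, with "preserving $B$" now meaning preserving a finite set of points on the torus. I normalize by translation so that one branch point is the origin.

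\textbf{Main obstacle.} I expect the finite-index step to be the hardest part, specifically controlling the branch locus $B$. The route I would try first is to show that every point of $B$ is periodic for the affine group of the base: for genus $>1$ via the uniqueness in Theorem~\ref{thm:Moller}, and for tori by showing the branch points differ by torsion points. If that fails, I would fall back on a weaker claim: the subgroups of $\mathrm{SL}(X)$ and $\mathrm{SL}(Y)$ that fix $B$ pointwise coincide, and each has finite index in its respective Veech group. This fallback suffices for the commensurability statement.
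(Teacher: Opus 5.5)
\textbf{The gap: branch points need not be periodic, and the statement as quoted is false.} The paper does not prove this statement; it quotes Gutkin--Judge, but drops their hypothesis on the branch locus. The step you flag as the main obstacle is exactly where this bites, and it cannot be repaired. Here is a torus counterexample. Let $Y=\C/\Z[i]$ and let $X$ be the double cover branched over $0$ and $z_0=x_0+iy_0$, with $1,x_0,y_0$ linearly independent over $\Q$.
\begin{enumerate}
\item Lifting the generators of $H_1(Y)$ off the slit gives $\mathrm{Per}(X)=\Z[i]$, so $p(x)=\int_{\tilde 0}^{x}\omega \bmod \Z[i]$.
\item Hence every affine automorphism of $X$ with derivative $A$ descends to $w\mapsto Aw+c$ with $A\in\mathrm{SL}(2,\Z)$. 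This map permutes $\{0,z_0\}$, the images of the two zeros.
\item So $(A-I)z_0\in\Z^2$ or $(A+I)z_0\in\Z^2$, and the independence of $1,x_0,y_0$ forces $A=\pm I$.
\end{enumerate}
Thus $\mathrm{SL}(X,\omega)=\{\pm I\}$ is finite while $\mathrm{SL}(Y,\eta)=\mathrm{SL}(2,\Z)$, and no conjugate of the latter meets the former in a finite-index subgroup.

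Your genus $>1$ case fails the same way. A branch point that ``arises canonically from the unique covering'' is only invariant under the image of $\mathrm{Aff}(X)$ in $\mathrm{Aff}(Z)$, so the argument is circular. Take $Z$ a primitive lattice surface of genus $2$ and $X$ a double cover branched at two non-periodic points $b_1,b_2$. If $\mathrm{SL}(X)$ had finite index in $\mathrm{SL}(Z)$, then a finite-index subgroup of $\mathrm{Aff}(Z)$ would preserve $\{b_1,b_2\}$, making $b_1$ periodic. Hubert--Schmidt's infinitely generated Veech groups arise from covers of this kind. Your fallback fails for the same reason: in these examples the stabilizer of $B$ has infinite index in the Veech group of the base.

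\textbf{What is true, and what the paper needs.} The correct statement is for balanced coverings: branching only over $\Sigma(Y)$ (zeros map to zeros), or more generally only over periodic points, with a marked point playing this role when the base is a torus. With that hypothesis added, your outline is essentially complete.
\begin{enumerate}
\item Containment follows from the uniqueness in Theorem~\ref{thm:Moller}. For a torus base, uniqueness is unavailable; use instead that $\mathrm{SL}(X,\omega)$ preserves the lattice $\mathrm{Per}(X)$.
\item $B$ is then invariant under a finite-index subgroup of $\mathrm{Aff}(Z)$.
\item That subgroup permutes the finitely many classes of transitive monodromies $\pi_1(Z\setminus B)\to S_d$, and the stabilizer of the class of $q$ lifts to $X$, giving finite index.
\end{enumerate}
The covers $\tilde P_n\to\Pi_n\to\Pi_n^h$ used in Theorem~\ref{thm:notlattice} send zeros to zeros, so the paper's application is unaffected. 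The fix is to add the balancedness hypothesis to the statement, not to try to prove that $B$ consists of periodic points.
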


We close with the Veech dichotomy and a corollary that puts strong restrictions on the moduli of cylinders for lattice surfaces.
    
\begin{thm}[Veech Dichotomy \cite{MR1005006}]\label{VeechDichotomy}
    If the Veech group $\SL{X,\omega}$ is a lattice in $\SL{2,\R}$, then for any direction, either the foliation in that direction is minimal, or every leaf is closed or a saddle connection.
\end{thm}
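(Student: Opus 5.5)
The plan is to argue by contradiction through the geometry of the Teichmüller geodesic flow on the closed orbit. Fix a direction $\theta$ in which the foliation is not minimal. Then there is a saddle connection $s$ in direction $\theta$, since the boundary of a non-minimal component consists of such saddle connections. After rotating by $r_\theta$ we may assume $\theta$ is vertical. Apply the diagonal flow $g_t=\mathrm{diag}(e^{t},e^{-t})$ with the contracting eigendirection vertical. Then the holonomy of $s$ on $g_t r_\theta(X,\omega)$ has length $e^{-t}|s|\to 0$, so by Mumford/Masur compactness the trajectory leaves every compact subset of the stratum.

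Since $\SL{X,\omega}=\Gamma$ is a lattice, the orbit map identifies $\SL{2,\R}\cdot(X,\omega)$ with $\SL{2,\R}/\Gamma$. Smillie's theorem that lattice surfaces have closed orbits makes this a proper embedding. So the divergence above says that the geodesic ray in $\mathbb H^2/\Gamma$ determined by $\theta$ leaves every compact set. For a finite covolume Fuchsian group, a divergent geodesic ray must eventually enter and stay in a cusp neighborhood. Hence its endpoint is a cusp of $\Gamma$, and $\theta$ is the eigendirection of a parabolic element $P\in\Gamma$. Let $\phi$ be an affine automorphism with $D\phi=P$, which in the rotated frame is $\begin{pmatrix}1&\tau\\0&1\end{pmatrix}$ with $\tau\ne0$. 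Because $\phi$ permutes the finitely many vertical saddle connections, we may replace $\phi$ by a power that fixes each of them and each component of their complement.

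It remains to show that every such component is a cylinder, so that every leaf is closed or a saddle connection. The complement of the vertical saddle connections splits into vertical cylinders and minimal components. Suppose $M$ is a minimal component, preserved by $\phi$, whose boundary vertical saddle connections are fixed. Pick a non-vertical saddle connection $\gamma$ inside $M$; its images $\phi^k(\gamma)$ stay in $M$ and have holonomy $\mathrm{hol}(\gamma)+k\tau\,y(\gamma)e_2$.

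Using the cusp picture quantitatively, $g_t r_\theta(X,\omega)$ differs from a fixed compact family by the unipotent $P$. So for large $t$ the surface $g_t r_\theta(X,\omega)$ decomposes into a thick part plus thin vertical pieces whose short curves are exactly the vertical saddle connections and cylinder cores. Masur's criterion then gives unique ergodicity of any minimal component. Combining this with the $\phi$-invariance and the linear shearing of every transverse segment by $\phi^k$, I would show that the first-return map to a horizontal transversal in $M$ is periodic. That contradicts minimality, and a minimal component of an interval exchange with this invariance is excluded.

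The main obstacle is this last step, ruling out minimal components invariant under a parabolic. Two routes are available. The first is an area/modulus argument: the cylinders of $\phi$'s eigendirection must have commensurable moduli, and an invariant minimal subsurface would force the holonomies of $\phi^k(\gamma)$ to accumulate. The second, which I would try if the first fails, is Veech's original action of $\phi$ on relative homology. There $\phi_*-\mathrm{id}$ has image spanned by classes of vertical curves, and one shows those classes are represented by closed leaves.
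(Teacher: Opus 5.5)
The paper gives no proof of this statement; it is quoted from Veech \cite{MR1005006}, so your argument has to stand on its own. Your reduction is sound. A non-minimal direction carries a saddle connection, so $g_t r_\theta(X,\omega)$ diverges in the stratum. By continuity of the orbit map $\SL{2,\R}/\Gamma\to\mathcal H$ alone (properness and Smillie's theorem are not needed), the geodesic ray in $\mathbb H^2/\Gamma$ also diverges, so it exits a cusp. Hence $\theta$ is fixed by a parabolic element, realized by an affine $\phi$ which, after passing to a power, fixes every saddle connection in direction $\theta$. There is a minor slip here: with $\theta$ vertical the derivative is $\begin{pmatrix}1&0\\ \tau&1\end{pmatrix}$. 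The matrix you wrote fixes the horizontal direction, and your formula for $\mathrm{hol}(\phi^k\gamma)$ matches neither matrix.

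The genuine gap is the step you yourself call the main obstacle: showing that no component $M$ of the complement of the vertical saddle connections is minimal. None of your suggested routes does this.
\begin{itemize}
\item Masur's criterion needs the Teichm\"uller ray to recur to a compact set, but here it diverges. Moreover, unique ergodicity of $M$ contradicts nothing, since the statement only concerns minimality. The periodicity of the first-return map is only asserted.
\item The ``modulus'' route presupposes the cylinder decomposition you are trying to prove. Also, $P^k\,\mathrm{hol}(\gamma)$ escapes linearly, so nothing accumulates.
\item The homology route asserts without justification that the image of $\phi_*-\mathrm{id}$ is spanned by vertical classes, and then defers exactly the needed conclusion.
\end{itemize}

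A short argument closes the gap.
\begin{itemize}
\item Since $Pe_2=e_2$, $\phi$ commutes with the vertical flow $v_s$.
\item $\phi$ is the identity on any vertical saddle connection $\sigma\subset\partial M$, because it fixes $\sigma$ setwise while preserving its length and orientation.
\item Near an interior point $q\in\sigma$ we have $\phi(q+(b,c))=q+(b,c+\tau b)$. So $\phi=v_{\tau b}$ at $q+(b,0)$, and by commutation along the whole vertical leaf $L_b$ through that point.
\item For small $b$ on the $M$-side, all but countably many such leaves are bi-infinite and therefore dense in $M$. By continuity, $\phi=v_{\tau b}$ on an open dense subset of $M$.
\item Comparing two admissible values $b\neq b'$ produces a point of $M$ fixed by $v_{\tau(b-b')}$, i.e.\ a closed leaf in $M$. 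Since $\tau\neq 0$, this contradicts minimality.
\end{itemize}
So every component is a cylinder, which is exactly what the statement requires.
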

\begin{cor}\cite[Lemma 3.8]{Ya}\label{CorVeech}
    Suppose that the Veech group $\SL{X,\omega}$ is a lattice in $\SL{2,\R}$. Then all horizontal cylinders have commensurable moduli, i.e., their ratio of height and circumference differs by a rational multiple.
\end{cor}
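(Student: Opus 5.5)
The plan is to turn the horizontal cylinder decomposition into a statement about one parabolic element of the Veech group, and then read off the moduli from how that element acts on each cylinder. Assume there is at least one horizontal cylinder; otherwise the statement is vacuous. Its boundary contains a horizontal saddle connection, so the horizontal foliation is not minimal. By the Veech dichotomy (Theorem~\ref{VeechDichotomy}) every horizontal leaf is then closed or a saddle connection. Hence $(X,\omega)$ decomposes into finitely many horizontal cylinders $C_1,\dots,C_k$ with heights $h_i$, circumferences $c_i$ and moduli $m_i=h_i/c_i$.

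\textbf{Step 1: a parabolic element fixing the horizontal direction.} I will show that $\SL{X,\omega}$ contains a nontrivial parabolic element fixing the horizontal direction. Let $v$ be the holonomy vector of a horizontal saddle connection, and let $\Gamma=\SL{X,\omega}$. The set $V$ of holonomy vectors of saddle connections is discrete in $\R^2$, $\Gamma$-invariant, and bounded away from $0$.

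Since $\Gamma$ is a lattice (a finite-covolume Fuchsian group), every point of $\partial\mathbb H=\mathbb{RP}^1$ is either a parabolic fixed point or a conical (radial) limit point. Suppose the horizontal direction were a conical limit point. Then there would be $g_n\in\Gamma$ such that $g_n$ contracts the horizontal direction with uniformly bounded distortion, so that $|g_nv|\to0$. This contradicts the fact that $V$ is bounded away from $0$. So the horizontal direction is a cusp of $\Gamma$, and after replacing by $-I$ times it if necessary, $\Gamma$ contains
\[
u=\begin{pmatrix}1&t\\0&1\end{pmatrix},\qquad t\neq0.
\]
I expect this step to be the main obstacle. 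Its justification rests on the cusp/conical-limit-point dichotomy for lattices, which I would cite from the standard theory of Fuchsian groups, as in Veech's original argument.

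\textbf{Step 2: the action on each cylinder.} Let $\phi$ be an affine automorphism with derivative $u$. It maps horizontal leaves to horizontal leaves and cone points to cone points, so it permutes the maximal horizontal cylinders $C_i$. Some power $\phi^N$ therefore fixes every $C_i$, has derivative $u^N$ with shear parameter $Nt$, and fixes the boundary saddle connections of each cylinder, since these are finitely many and are permuted.

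Restricted to $C_i\cong(\R/c_i\Z)\times(0,h_i)$, the map $\phi^N$ is an affine self-map of the cylinder with horizontal shear $Nt$. It acts on the two boundary components by translations that can differ only by multiples of $c_i$. Comparing the displacement of the bottom and top boundaries gives $Nt\,h_i\in c_i\Z$, that is, $Nt\,m_i\in\Z$ for every $i$. Because $t\neq0$, each $Nt\,m_i$ is a nonzero integer.

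\textbf{Step 3: conclusion.} For any $i,j$ we get
\[
\frac{m_i}{m_j}=\frac{Nt\,m_i}{Nt\,m_j}\in\Q,
\]
which is the claimed commensurability of the moduli of the horizontal cylinders.
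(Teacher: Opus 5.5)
Your proof is correct and is the standard Veech argument behind the cited lemma. The paper gives no proof of its own, and its stated form of the Veech dichotomy omits the parabolic element; your Step~1 supplies it via the cusp/conical-limit-point dichotomy, and the parabolic then acts on each horizontal cylinder as a power of a Dehn twist, giving $Nt\,m_i\in\mathbb{Z}$. One small repair: if $\Gamma$ contains $-u$ but not $-I$, you cannot multiply by $-I$; use $(-u)^2=u^2\in\Gamma$ instead, which changes nothing downstream.
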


%%%%%%%%%%%%%%%%%%%%%%%%%%%%%%%%%%%%%%%%%%%%%%%%%%%%%%%%

\section{Unfoldings, covers and hyperellipticity}

In this section, we consider the unfoldings of flat metric on the surface of a right regular $n$-prism, show they are never primitive, and that they cover a hyperelliptic translation surface.

 By Lemma 2.5 of Athreya--Aulicino--Hooper \cite{MR4477409}, we immediately obtain the stratum of the unfolding by noting that each cone point is a simple pole. We summarize this below.

\begin{lem}\label{lem: unfoldings tilde Pn}
    The flat metric on the surface a right regular $n$-prism is an $n$-differential with no zeros and $2n$ simple poles. Thus, we have $P_n\in\mathcal H_n(-1^{2n})$ and of genus 0 and the canonical $n$-cover $\tilde P_n$ lies in the stratum $\mathcal H((n-2)^{2n})$ and has genus $(n-1)^2$.
\end{lem}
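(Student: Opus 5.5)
We argue in three steps: identify the cone points of $P_n$ and their angles, read off the $n$-differential data, and then compute the canonical $n$-cover.

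\textbf{Cone points of $P_n$.} The surface of $P_n$ consists of two regular $n$-gons (top and bottom) and $n$ unit-height rectangles. It has exactly $2n$ vertices. At each vertex, three faces meet: one regular $n$-gon with interior angle $\pi(n-2)/n$, and two rectangles with angle $\pi/2$ each. The total cone angle is therefore
$$\pi + \frac{\pi(n-2)}{n} = \frac{2\pi(n-1)}{n}.$$
Every point that is not a vertex (interior points of faces and of edges) has angle $2\pi$, so it is a regular point.

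\textbf{The $n$-differential.} Edge gluings identify faces by Euclidean isometries. All face edges have directions in multiples of $2\pi/n$: the $n$-gon edges, and the rectangle sides, which are either parallel to a base edge or perpendicular to it. For the perpendicular sides, $\pi/2$ is a multiple of $2\pi/n$ only when $4\mid n$. So we should not argue only through edge directions. Instead we use the cone angles directly: the linear holonomy around each vertex is rotation by its cone angle, which is a multiple of $2\pi/n$. The holonomy of the flat metric on the sphere minus the vertices is generated by the loops around the vertices, so it lies in $\Z/n$. Hence the metric is given by an $n$-differential $\xi$.

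The correspondence between cone angles and orders, namely angle $2\pi(j+k)/k$ for order $j$, gives $j+n = n-1$ at each vertex, so $j=-1$. Thus $\xi$ has $2n$ simple poles and no zeros. The surface is a sphere, so $g=0$, and the degree check $k(2g-2) = -2n = \sum m_j$ is consistent. So $P_n \in \mathcal H_n(-1^{2n})$.

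\textbf{The canonical cover.} Following Lemma 2.5 of Athreya--Aulicino--Hooper \cite{MR4477409}, a cone point of angle $2\pi a/n$ with $\gcd(a,n)=d$ lifts to $d$ cone points of angle $2\pi a/d$ in the canonical $n$-cover. Here $a=n-1$, so $\gcd(n-1,n)=1$. Each vertex therefore has exactly one preimage, which is fully ramified of angle $2\pi(n-1)$, that is, a zero of order $n-2$. Since every $a$ is coprime to $n$, the holonomy is surjective onto $\Z/n$, so the cover is connected. This gives $\tilde P_n \in \mathcal H((n-2)^{2n})$.

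The genus follows either from $2g-2 = 2n(n-2)$, giving $g=(n-1)^2$, or from Riemann--Hurwitz: $2g-2 = n(-2) + 2n(n-1)$, which gives the same value.

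The only delicate point is justifying that the holonomy lies in $\Z/n$ and is surjective, so that $\tilde P_n$ is connected. This follows from the vertex angles as explained above.
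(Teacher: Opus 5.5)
Your proof is correct and follows the paper's route. Each of the $2n$ vertices has cone angle $\pi+\frac{(n-2)\pi}{n}=\frac{2\pi(n-1)}{n}$, so it is a simple pole, and Lemma 2.5 of Athreya--Aulicino--Hooper then gives the stratum $\mathcal H((n-2)^{2n})$ and genus $(n-1)^2$. Your holonomy argument for surjectivity onto $\Z/n\Z$ (so the canonical cover is connected) and your Riemann--Hurwitz check only make explicit what the paper leaves implicit; the aside on edge directions is unneeded, since what matters is the rotational part of the gluings (a multiple of $2\pi/n$), and you rightly do not rely on it.
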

While Lemma 2.5 provides us with the cone data and start of $\tilde P_n$, it does not make clear the side identifications. As such, we provide the unfolding procedure of Zemlyakov--Katok \cite{MR4893865}.

 The polygon of the unfolding of $P_n$ consists of $n$ copies of the net (without identifications) where each copy is rotated by $2\pi/n$. We refer to the ``top" $n$-gon as the \emph{center}, the square panels emanating from the center as the \emph{petals}, and the ``bottom"  $n$-gon as the \emph{base}. Often we refer to the petal connecting the center and base as the \emph{stem}. We establish the identifications shortly after setting up some notation of $\tilde P_n$. See Figure \ref{TildeP5} for the case $n=5$ as a reference.

 We label each copy of the $P_n$ starting from the non-rotated copy of $P_n$ labeled as $P_{n,0}$ and label the other rotated copies in counterclockwise order as $P_{n,1}, \dots, P_{n,n-1}$. For convenience, we also label each rotated copy of $P_n$ at the edge that connects the stem and the base. Furthermore, we call the edges on the petals that are perpendicular to the center the \emph{inner edges}, and all other edges on the petal not attached to the center the \emph{outer edges}. We label edges on every $P_{n,k}$ in the following way:
    \begin{itemize}
        \item Inner edges: any edge $i$ that was previously identified in the net $P_n$ will be labeled as $i_{inn}^+$ and $i_{inn}^-$ with $i_{inn}^+$ oriented rightward $i_{inn}^-$. We start by labeling the inner edge of the stem with $0_{inn}^+$ on the left and $1_{inn}^-$ on the right. The labels $$\lset{0_{inn}^-, 0_{inn}^+,\dots,(n-1)_{inn}^-,(n-1)_{inn}^+}$$ are ordered counterclockwise.
        \item Outer edges: the edge in each $P_{n,i}$ between the stem and the base will be labeled $0_{out}^+$ on the stem and $0_{out}^-$ on the base. We label the outer edges of the petals $\lset{0_{out}^+,\dots,(n-1)_{out}^+}$ ordered counterclockwise and of the base $\lset{0_{out}^-,\dots,(n-1)_{out}^-}$ ordered clockwise.
    \end{itemize} Thus, each edge in $\tilde{P}_n$ has a unique label $\prt{i_{inn, out}^{+,-},k}$, for each $ i=1,\ldots, n-1,$ and $k\in\Z/n\Z$  where $k$ represents that the edge is on the $k$-th rotated copy of $P_n$. 
    
    The unfolding procedure indicates how the edge identifications in $\tilde{P}_n$ are for each $i, k\in\Z/n\Z$: \begin{equation}\label{eqn:identificationsof TildePn}
    \prt{i_{inn}^+,k}\sim\prt{i_{inn}^-,k+1}\ \text{and}\ \prt{i_{out}^+,k}\sim\prt{i_{out}^-,k+2i}.
    \end{equation}
    
    In particular, by pasting the base of each rotated  $n$-gon $\prt{i_{out}^-,i}$ to the square edge $\prt{i_{out}^+,-i}$,  we obtain $n$ translated copies of $P_{n,0}$. That is, $n$-copies of the same polygon as the net.
    
    Let $\Pi_n$ denote one such translated copy. See Figure~\ref{Pi(6,h)} for an example when $n=6.$ This defines a translation surface with the same net as $P_n$, but with different identifications.  The edge identifications for $\Pi_n$, which are implicit in Equation~\ref{eqn:identificationsof TildePn}, are given by the following:
\begin{itemize}
    \item[(C-1)] We identify the two edges of a petal that are perpendicular to the center and base edges.
    \item[(C-2)] For the remaining edge $s$ on a stem, we note that the opposite side of the square that it is attached to on the center has a unique edge $s'$ on the base $n$-gon that is a $180^\circ$ rotation of the center $n$-gon. We attach $s$ to $s'$ by translation.
\end{itemize}

    This discussion has proved the following:

\begin{figure}
    \centering
    \includegraphics[scale=.6]{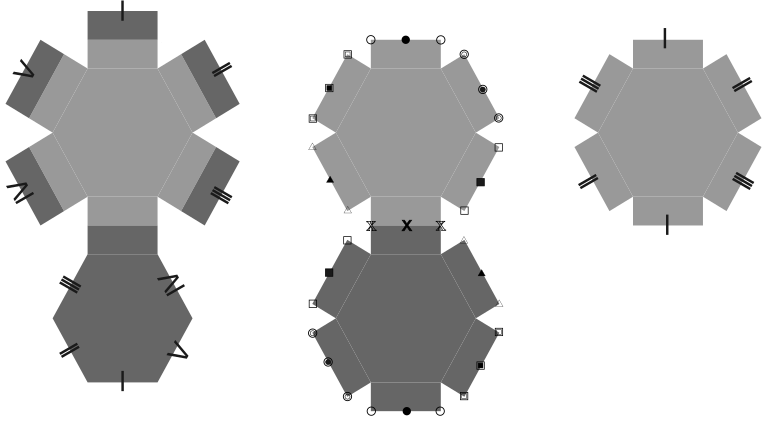}
    \caption{On the left is $\Pi_6$ with identifications for the outer edges. Inner edges on the same petal are identified. In the middle is a cut and paste of $\Pi_6$ along with the fixed points of the hyperelliptic involution. On the right is $\Pi_6 ^h$ with identifications of the outer edges.} \label{Pi(6,h)}
\end{figure}

\begin{prop}\label{CoveringMap}
    The surface $\tilde P_n$ is never primitive. In fact, there is a degree $n$ translation covering  $p:\tilde P_n\to \Pi_n$.
\end{prop}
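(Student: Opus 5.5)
We construct the covering map $p:\tilde P_n\to\Pi_n$ by hand from the polygon presentation, and then read off non-primitivity from the genus count. First we regroup the $n$ rotated copies $P_{n,0},\dots,P_{n,n-1}$ of the net into $n$ translated copies of $P_{n,0}$. For each $i$ we cut the base $n$-gon attached along $\prt{i_{out}^-,i}$ and reglue it along the identified edge $\prt{i_{out}^+,-i}$. This uses the outer identification $\prt{i_{out}^+,k}\sim\prt{i_{out}^-,k+2i}$ from Equation~\ref{eqn:identificationsof TildePn} with $k=-i$. Since the copy $P_{n,k}$ is rotated by $2\pi k/n$, the pasted base is rotated by $2\pi i/n$ relative to the stem copy's frame. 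One checks that this is exactly the rotation needed for a rotated center, together with its petals and the relocated base, to form a translate of the net $P_{n,0}$. In this way the polygon of $\tilde P_n$ is re-cut into $n$ polygons $Q_0,\dots,Q_{n-1}$, each a translate of $P_{n,0}$.

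Next we define $p$ on each $Q_j$ as the translation onto the single polygon $\Pi_n$. This map is a local isometry in the translation structure on interiors. To see that it descends to a well-defined map on the surfaces, we verify that every edge gluing of $\tilde P_n$ between the pieces $Q_j$ projects to one of the gluings (C-1), (C-2) of $\Pi_n$.

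\begin{itemize}
\item \textbf{Inner edges.} The rule $\prt{i_{inn}^+,k}\sim\prt{i_{inn}^-,k+1}$ glues two parallel edges of the same petal shape in adjacent copies. Under $p$ this becomes the identification of the two perpendicular edges of a petal, which is (C-1).
\item \textbf{Outer edges not used in the regrouping.} These glue a petal's outer edge to a base edge that is parallel to it and translated. After the regrouping, the base edge in question is the $180^\circ$-opposite edge of the base $n$-gon, which is (C-2).
\end{itemize}

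This case check is the main bookkeeping obstacle. We would do it by tracking the direction vector of each edge, namely $e^{2\pi i(\cdot)/n}$ times a fixed vector, together with the index shifts $k\mapsto k+1$ and $k\mapsto k+2i$. We would also treat the parity of $n$ carefully, since opposite edges of the base behave differently for even and odd $n$. Once the edges match, $p$ extends continuously over the vertices, and hence holomorphically, with $p^*$ of the form on $\Pi_n$ equal to the form on $\tilde P_n$. The map is therefore a translation cover. Each point of $\Pi_n$ away from vertices has exactly one preimage in each $Q_j$, so the degree is $n$.

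Finally we show the cover is nontrivial in the sense of primitivity. The surface $\Pi_n$ is a single copy of the net, so its Euler characteristic is small. We compute its genus either from the vertex count of (C-1) and (C-2), or from Riemann--Hurwitz using the stratum data $\mathcal H((n-2)^{2n})$ and genus $(n-1)^2$ of $\tilde P_n$ from Lemma~\ref{lem: unfoldings tilde Pn}. Riemann--Hurwitz gives $2(n-1)^2-2 \ge n\,(2g(\Pi_n)-2)$, which forces $g(\Pi_n)<(n-1)^2$ for $n\ge3$. Hence $\tilde P_n$ covers a translation surface of lower genus and is never primitive.
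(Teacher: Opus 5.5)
Your proposal is correct and takes essentially the same route as the paper. You regroup the $n$ rotated copies by pasting the base edge $(i_{out}^-,i)$ onto $(i_{out}^+,-i)$ to get $n$ translates of the net, map each piece by translation onto $\Pi_n$, and check that the inner and outer gluings descend to (C-1) and (C-2). Your Riemann--Hurwitz bound $g(\Pi_n)\le n-1<(n-1)^2$ is a clean, self-contained justification of non-primitivity, where the paper instead relies on the direct Euler characteristic count $g(\Pi_n)=n-1$ in the theorem that follows the proposition.
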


We record the genus and combinatorial data of the cone points of $\Pi_n$ in the following theorem. 

\begin{thm}\label{Pi_n genus and stratum}
    The surface $\Pi_n$ has genus $n-1$ and lies in the stratum $\mathcal{H}\prt{n-2,n-2}$.
\end{thm}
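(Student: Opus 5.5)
The plan is to read the genus and the cone data of $\Pi_n$ directly off its polygon presentation. I will find the vertex equivalence classes, compute the cone angle of each class, and then get the genus from an Euler characteristic count, checking it against Gauss--Bonnet. The polygon is the net of $P_n$: one center $n$-gon, $n$ unit squares (the petals), and one base $n$-gon attached to the stem. I treat it as a single polygon, so $F=1$.

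\textbf{Edge count.} The boundary of the net consists of three kinds of edges:
\begin{itemize}
\item the $2n$ inner edges of the petals;
\item the $n-1$ outer edges of the non-stem petals;
\item the $n-1$ free edges of the base.
\end{itemize}
That is $4n-2$ boundary edges. Under (C-1) and (C-2) they are glued in pairs, so the surface has $E=2n-1$ edges. Once the number $V$ of vertex classes is known, Euler's formula gives
$$V-(2n-1)+1=2-2g, \qquad\text{so}\qquad g=\frac{2n-V}{2}.$$
It therefore suffices to show $V=2$ and that each class has cone angle $2\pi(n-1)$. The angle claim is then consistent with Gauss--Bonnet, since $2(n-2)=2g-2$ when $g=n-1$.

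\textbf{First class: the center vertices.} Each vertex of the center $n$-gon has angle $(n-2)\pi/n$ from the $n$-gon and two right-angle corners, one from each adjacent petal. Gluing (C-1) identifies the two inner edges of a petal, so each petal becomes a cylinder. This identifies the center vertex at the left end of the petal with the center vertex at its right end, i.e.\ the next center vertex in cyclic order. Hence all $n$ center vertices form a single class. Its total angle is
$$n\left(\frac{(n-2)\pi}{n}+\pi\right)=(2n-2)\pi=2\pi(n-1).$$
No other polygon vertex enters this class: the outer corners of the petals are identified among themselves by (C-1), and (C-2) only touches outer edges.

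\textbf{Second class: everything else.} The remaining polygon vertices are the outer corners of the petals and the vertices of the base. The total interior angle of the whole polygon is
$$2(n-2)\pi+2n\pi=(4n-4)\pi,$$
so the remaining vertices carry total angle $2\pi(n-1)$ as well. The main obstacle is showing that these vertices form exactly one class rather than several; together with the angle total this yields $\mathcal H(n-2,n-2)$. I would argue this by chasing identifications.
\begin{itemize}
\item By (C-1), the two outer corners of each petal are identified.
\item By (C-2), the outer edge of petal $i$ is glued to the base edge obtained by the $180^\circ$ rotation. This identifies its endpoints with two consecutive base vertices, in the order reversed by the rotation.
\end{itemize}
Starting from one outer corner and alternating these two moves should sweep through every base vertex and every outer petal corner. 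The stem needs separate care, since its outer edge is attached to the base rather than glued. The $180^\circ$ rotation behaves differently according to parity (opposite vertex versus opposite edge), so I will check the odd and even cases separately. I will track the cyclic index, which should shift by a fixed amount at each step, and verify that it generates $\Z/n\Z$.

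As a fallback, I would argue indirectly. Suppose the remaining vertices split into several classes. Each class has angle a positive multiple of $2\pi$, and no class can be a regular point: a regular point of $\Pi_n$ would have regular preimages under the cover $p$ of Proposition~\ref{CoveringMap}, whereas by Lemma~\ref{lem: unfoldings tilde Pn} every vertex of $\tilde P_n$ is a cone point of order $n-2$. This bounds the number of classes, but it does not by itself force a single class. I would therefore still rely mainly on the explicit chase.

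Once $V=2$ is established, Euler's formula gives $g=n-1$, and the two cone angles of $2\pi(n-1)$ place $\Pi_n$ in $\mathcal H(n-2,n-2)$.
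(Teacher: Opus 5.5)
Your framework is correct and parallels the paper's own argument. The paper also combines an Euler characteristic count (with $F=n+2$, $E=3n$, where you use a single polygon with $E=2n-1$) with $V=2$ and an angle count. Your edge count, the center class of angle $2\pi(n-1)$, and the total angle $(4n-4)\pi$ are all right.

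The gap is the step you yourself flag as the main obstacle. You never show that the outer petal corners and the base vertices form a single class; you only say the chase ``should'' sweep through them. Both $g=(2n-V)/2$ and the stratum depend on $V=2$, so this step cannot be deferred. Your parity worry (``opposite vertex versus opposite edge'') also points to a misreading of (C-2). The $180^\circ$ rotation there is not a rotation of the center $n$-gon about its own center. It is the rotation $R$ about the center of the stem, which carries the center $n$-gon onto the base.

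With that reading the chase is immediate:
\begin{enumerate}
\item Label the center vertices $c_0,\dots,c_{n-1}$ counterclockwise, set $e_j=[c_j,c_{j+1}]$ with $e_0$ the stem edge, and let $\nu_j$ be the outward unit normal of $e_j$.
\item The base vertices are $R(c_0),\dots,R(c_{n-1})$, and the stem's outer corners are $R(c_0)=c_1+\nu_0$ and $R(c_1)=c_0+\nu_0$.
\item For $j\neq 0$, (C-2) translates the outer edge $[c_j+\nu_j,\,c_{j+1}+\nu_j]$ of petal $j$ onto $R(e_j)$. Since $R$ reverses direction, $c_j+\nu_j\mapsto R(c_{j+1})$ and $c_{j+1}+\nu_j\mapsto R(c_j)$.
\item By (C-1) these two corners of petal $j$ are identified, so $R(c_j)\sim R(c_{j+1})$ for $j=1,\dots,n-1$ (indices mod $n$).
\end{enumerate}
This is a chain through all $n$ base vertices with index shift $1$, independent of parity, and every outer corner is glued to it. Hence $V=2$.

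Alternatively, your fallback does close the argument once you add a divisibility step. It uses, as you note, that every vertex of $\tilde P_n$ has angle $2\pi(n-1)$.
\begin{itemize}
\item A vertex of $\tilde P_n$ lying over a class of angle $\theta$ has local degree $e$ with $e\theta=2\pi(n-1)$. Since $\theta$ is a multiple of $2\pi$, this gives $e\mid n-1$.
\item The corners over that class are exactly the $n$ lifts of its corners, so the total angle above it is $n\theta$. There are therefore $n/e$ vertices above it, so $e\mid n$.
\end{itemize}
Thus $e=1$ and every class has angle $2\pi(n-1)$. The total $(4n-4)\pi$ then forces exactly two classes, with no combinatorial chase at all.
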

\begin{proof}
     Let $F(\cdot), E(\cdot), V(\cdot)$, denote the number of faces, edges, and vertices of a polygon. Each Euclidean polygon in $\Pi_n$ yields one face, so $F(\Pi_n)=n+2$. Moreover, since there are $4n$ edges from the petals and $2n$ edges from the base and center, with the edge identifications of $\Pi_n$, we have $E(\Pi_n)=\frac{4n+2n}{2}=3n$. Now, to count the number of vertices, we use the same notion as of Proposition \ref{CoveringMap} to divide the vertices of $\Pi_n$ into inner points and outer points. The inner points contribute one vertex count since it only depends on the identification of edges perpendicular to the attached edge of the square panels and the center. The outer points also contribute one vertex count since one alternatively identifies the petal vertices with the base vertices, and two outer points within each petal are identified as the same one. Hence, we have 
    \begin{align*}
        2-2g&=V(\Pi_n)-E(\Pi_n)+F(\Pi_n)\\
        &=2-3n+n+2\\
        &=4-2n,
    \end{align*} which implies $g=n-1$. Furthermore, by a cut and paste that move the petals from the center onto the base, inner points transform into outer points, so the cone angles of two vertices of $\Pi_n$ are the same. Since $n$ petals contribute $n\pi$ angle and the interior of the $n$-gon contributes $(n-2)\pi$, the total cone angle is $$n\pi+(n-2)\pi=(n-1)2\pi=((n-2)+1)2\pi,$$ which show that $\Pi_n$ lies in the stratum $\mathcal{H}\prt{n-2,n-2}$. \end{proof}

    When $n$ is even, we can demonstrate that $\Pi_n$ is not primitive. That is, there is a surface, which we denote by $\Pi_n ^h$ for which there is a degree two translation cover $\Pi_n
    \to\Pi_n ^h$. The surface $\Pi_n ^h$ is constructed by removing the base and keeping only half of each petal emanating from the center. See Figure~\ref{Pi(6,h)} for an example when $n=6$ and how this forms a degree two cover. This cover only holds when $n$ is even due to the fact that the base is invariant under 180-degree rotation.

In the following, we record the genus and combinatorial data of the cone points.

\begin{thm}
    The surface $\Pi_n^h$ has genus $\frac{n}{2}$ and lies in the stratum $\mathcal{H}\prt{n-2}$.
\end{thm}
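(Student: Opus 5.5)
We prove this in two ways: directly, with the Euler characteristic computation used for Theorem~\ref{Pi_n genus and stratum}, and as a consistency check via Riemann--Hurwitz for the degree two cover $\Pi_n\to\Pi_n^h$.

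\textbf{Polygon presentation.} For $n$ even, $\Pi_n^h$ consists of the center $n$-gon together with $n$ half-petals. Each half-petal is a rectangle of width equal to the side length and height $\tfrac12$, attached to a side of the center. The inner edges of each half-petal (the two edges perpendicular to the center) are still identified with each other, as in (C-1), so each half-petal becomes a half-height cylinder glued to the center. The remaining outer edge of the half-petal attached to side $s$ of the center is glued by translation to the outer edge of the half-petal on the opposite side $s'$. This is possible because $n$ even means opposite sides of the center are parallel; it replaces the role of the base in (C-2).

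\textbf{Genus by Euler characteristic.} We count $F=n+1$ faces (the center and $n$ rectangles). The edges are:
\begin{itemize}
\item $n$ center sides, each shared with a rectangle;
\item $n$ inner-edge pairs, giving $n$ edges;
\item $n$ outer edges glued in pairs, giving $n/2$ edges.
\end{itemize}
So $E=n+n+\tfrac n2=\tfrac{5n}{2}$.

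For vertices, the inner points are the vertices of the center, and every inner edge joins an inner point to an outer point. The (C-1) identification glues the two inner vertices of each rectangle, so consecutive center vertices are identified around the center and all inner points become a single vertex. For the outer points, each rectangle's two outer corners are identified by (C-1). The outer-edge gluing identifies the corners of rectangle $s$ with those of rectangle $s'$, and the inner edges join neighbouring rectangles' outer corners along adjacent center vertices. The key step, and the main obstacle, is to check that these identifications chain all outer corners into one class and that this class is disjoint from the inner vertex. I expect this to be the main bookkeeping step. I would verify it by following the angle around a corner explicitly, as in the proof of Theorem~\ref{Pi_n genus and stratum}, checking $n=6$ against Figure~\ref{Pi(6,h)}, and using a general index argument modulo $n$.

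Granting one outer vertex and one inner vertex gives $V=2$, which is too many. Equivalently, the inner points and the outer points coincide: cutting along the centerlines of the original petals, the inner vertex of $\Pi_n$ maps to the outer vertex of the quotient. We must therefore have $V=1$. Then
\[
2-2g=V-E+F=1-\tfrac{5n}{2}+n+1=2-\tfrac{3n}{2}+\dots
\]
This arithmetic needs rechecking, so the more reliable route is the Riemann--Hurwitz check below.

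\textbf{Check via Riemann--Hurwitz.} The cover $\Pi_n\to\Pi_n^h$ has degree $2$, and $\Pi_n\in\mathcal H(n-2,n-2)$ has genus $n-1$. The two cone points of $\Pi_n$ are swapped by the deck involution, since that involution exchanges the center and the base and hence the inner and outer points. So they map to a single point of $\Pi_n^h$, and the cover is unbranched there. The involution has no other fixed points, because it is fiberwise the translation $\Pi_n\to\Pi_n$ given by the cut-and-paste in Figure~\ref{Pi(6,h)}, exchanging each half-petal with its partner. Riemann--Hurwitz then gives
\[
2(n-1)-2=2\bigl(2g(\Pi_n^h)-2\bigr),
\]
so $g(\Pi_n^h)=\tfrac n2$. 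Since the cover is unramified, the single cone point downstairs has the same angle $2\pi(n-1)$ as each cone point upstairs, so $\Pi_n^h\in\mathcal H(n-2)$. As a check, $2g-2=n-2$.

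This confirms $V=1$, so the Euler count should read $1-E+F=2-n$. Resolving this discrepancy with the edge count $E=\tfrac{5n}{2}$ above is the step I would recheck in the write-up.
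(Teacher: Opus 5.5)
Your Riemann--Hurwitz argument is a correct proof, and it takes a different route from the paper. The paper works directly on the polygon presentation of $\Pi_n^h$. It counts $F=n+1$, $E=\tfrac{5n}{2}$, $V=\tfrac n2+1$, and then reads off the angle $n\pi+(n-2)\pi$ at the center vertex.

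You instead pull everything back from Theorem~\ref{Pi_n genus and stratum}. The deck involution of $\Pi_n\to\Pi_n^h$ exchanges the center piece and the base piece of the cut-and-paste, so it swaps the two zeros. Say explicitly that, being a nontrivial translation automorphism, it fixes no regular point. Then the double cover is unramified, and $2(n-1)-2=2(2g-2)$ together with preservation of cone angles gives $g=\tfrac n2$ and a single zero of order $n-2$. Every other point downstairs has regular preimages and is therefore regular.

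The paper's count is self-contained and needs neither the covering nor the stratum of $\Pi_n$. Yours avoids all vertex bookkeeping. It also records that $\Pi_n\to\Pi_n^h$ is unbranched and sends zeros to zeros, which is the kind of fact used later for $\tilde P_n\to\Pi_n^h$ in the proof of Corollary~\ref{cor:asymforPn}.

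Your Euler-characteristic part, however, is wrong, and your closing diagnosis points at the wrong quantity. $E=\tfrac{5n}{2}$ and $F=n+1$ are correct; the vertex count is not. The outer corners do not chain into one class. By (C-1), each half-petal's two inner edges are glued to each other, as you yourself stated. So two neighbouring half-petals merely share an endpoint at a center vertex, and nothing identifies their outer corners. The outer corners of the half-petal on a side $s$ are identified only with each other and, through the outer-edge gluing, with those of the half-petal on the opposite side. This gives $\tfrac n2$ classes, each made of four right angles, i.e.\ regular points. Hence $V=\tfrac n2+1$ and $V-E+F=(\tfrac n2+1)-\tfrac{5n}{2}+(n+1)=2-n$, which is exactly the paper's computation.

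In particular, $V=2$ is too few, not too many. The claims that the inner and outer points coincide, and that ``this confirms $V=1$'', are false. Both zeros of $\Pi_n$ map to the single center vertex of $\Pi_n^h$, while the outer corners of the half-petals are images of the regular midpoints of the glued inner edges of the petals of $\Pi_n$. Either correct the vertex count as above or drop the Euler part; as written, it contradicts your correct second argument.
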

\begin{proof}
    Let $F(\cdot), E(\cdot), V(\cdot)$ denote the number of faces, edges, and vertices of a polygon. Each Euclidean polygon in $\Pi_n^h$ yields one face, so $F\prt{\Pi_n^h}=n+1$. Moreover, since there are $4n$ sides from the half-petals and $n$ sides from the center, with edge identifications of $\Pi_n^h$, we have $E\prt{\Pi_n^h}=\frac{5n}{2}$. For the vertices, the same idea as Theorem \ref{Pi_n genus and stratum} provides that the inner points contribute one vertex count and outer points contribute $\frac{n}{2}$ vertex count. Hence, we have \begin{align*}
        2-2g&=V\prt{\Pi_n^h}-E\prt{\Pi_n^h}+F\prt{\Pi_n^h}\\
        &=\prt{\frac{n}{2}+1}-\frac{5n}{2}+\prt{n+1},
    \end{align*} which shows $g=\frac{n}{2}$. The cone angle of the vertices from the outer points is $2\pi$ since they only depend on two corners in each of the opposite two half-petals. For the cone angle of the vertex from the inner points, it depends on the interior angle of $n$-gon and two corners of each half-petal. Therefore, we have 
        $$n\pi+(n-2)\pi=(n-1)2\pi=((n-2)+1)2\pi,$$
     which shows that $\Pi_n ^h$ lies in the stratum $\mathcal{H}\prt{n-2}$.
\end{proof}

We end this section by proving that the surfaces $\Pi_n$ are hyperelliptic. 

A $g$ translation surface $(X,\omega)$ is \emph{hyperelliptic} if there exists an involution $\sigma:X\to X$ such that $\sigma^*(\omega)=-\omega$. We call $\sigma$ the hyperelliptic involution. Geometrically, with respect to the flat structure of the translation surface, $\sigma$ corresponds to a 180-degree rotation. In this case, the map $\sigma$ has $2g+2$ fixed points.

\begin{prop}\label{prop:hyperelliptic}
    The surface $\Pi_n$ is hyperelliptic.
\end{prop}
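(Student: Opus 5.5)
We exhibit the involution $\sigma$ on the polygon presentation of $\Pi_n$ as an explicit $180^\circ$ rotation, check that it respects the gluings (C-1) and (C-2), and count its fixed points to confirm it is the hyperelliptic involution: $2g+2 = 2(n-1)+2 = 2n$ fixed points, by Theorem~\ref{Pi_n genus and stratum}.

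The natural candidate is the map that rotates the center $n$-gon by $180^\circ$ about its own center, rotates the base $n$-gon by $180^\circ$ about its own center, and rotates each petal by $180^\circ$ about the petal's center.

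\begin{itemize}
\item \emph{Polygons.} Each of these pieces is individually invariant under $180^\circ$ rotation. This holds for the petals because they are squares. For the $n$-gons it fails when $n$ is odd, so in that case we refine the construction. We send the center to the base via the $180^\circ$ rotation that interchanges them, which is legitimate since (C-2) says exactly that the base edge $s'$ is the $180^\circ$ rotation of the center edge opposite $s$. We likewise permute the petals: petal $j$, attached to center edge $e_j$, goes to the petal attached to the center edge playing the role of the rotated base edge.
\item \emph{Gluings.} Since $\sigma$ acts as $z\mapsto -z+c$ on each piece, $\sigma^*\omega=-\omega$ holds automatically. What must be checked is that $\sigma$ is well defined, i.e.\ that it maps glued edge pairs to glued edge pairs with compatible translations.
\end{itemize}

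Concretely, we would proceed in three steps.

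\begin{enumerate}
\item Using the cut-and-paste of Figure~\ref{Pi(6,h)} (moving petals from the center onto the base), we re-present $\Pi_n$ as a polygon that is centrally symmetric with opposite sides glued. The (C-1) gluings already identify opposite sides of each petal, which is consistent with the rotation of a square.
\item In this re-presentation, we take $\sigma$ to be the point reflection through the center of symmetry. A centrally symmetric polygon with opposite parallel sides identified by translation always carries this involution.
\item We verify that the side pairing is indeed "opposite sides." This is precisely what (C-2) encodes for the stem and outer edges.
\end{enumerate}

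To count fixed points, we locate them as follows:
\begin{itemize}
\item the center of the polygon(s);
\item the midpoints of the glued edge pairs that are swapped, i.e.\ the midpoints of the petal/inner edges fixed by the rotation;
\item the cone points, which are fixed if they are individually invariant.
\end{itemize}
Figure~\ref{Pi(6,h)} indicates these points for $n=6$. We then check that the total is $2n$. For instance, when $n$ is odd the two cone points are swapped (being the inner and outer classes), so the fixed points must all be regular points: $n$ edge midpoints among the $3n$ edges, plus centers of faces. The count should be done separately for even and odd $n$.

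The main obstacle is step 1: producing a clean centrally symmetric presentation valid for all $n$ and confirming that every gluing in (C-2) becomes an identification of opposite sides. I would first try it for odd $n$ by gluing each petal to the base along its outer edge (C-2), so that the center and petals on one side are matched with the base and petals on the other. The fixed-point count then serves as a consistency check. Alternatively, hyperellipticity follows from having $2g+2$ fixed points of an involution with $\sigma^*\omega=-\omega$, which is a standard criterion and could be cited instead.
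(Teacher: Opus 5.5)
\textbf{There is a genuine gap: the involution you propose is the wrong one, and not only for odd $n$.} Your overall plan (a $180^\circ$ rotation $\sigma$ with $\sigma^*\omega=-\omega$, a check of the gluings, and a count of $2n=2g+2$ fixed points) is the paper's plan.

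Let $R$ be the rotation by $\pi$ about the center $c$ of the stem, and let $Q_e$ be the petal on the center edge $e$. By (C-2), the outer edge of $Q_e$ is glued to the base edge $R(e)$. Rotating $Q_e$ about its own center exchanges its inner edge (on $e$) with its outer edge (glued to $R(e)$). Rotating the center $n$-gon about its own center instead sends $e$ to the opposite center edge, which is a different edge of $\Pi_n$. So your ``natural candidate'' is not well defined even for even $n$.

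The natural repair for even $n$ keeps the self-rotations of both $n$-gons and sends $Q_e$ to the opposite petal. That map is well defined, but it fixes only the two $n$-gon centers and the two cone points, so it is not the hyperelliptic involution. Handling even $n$ separately, with the cone points possibly fixed, therefore leads you to the wrong map.

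The correct involution is uniform in $n$ and is the one the paper uses. You describe it only as the odd-$n$ refinement:
\begin{itemize}
\item $\sigma=R$ on the center, stem and base (the base is $R$ of the center by construction of the net);
\item on every other petal $Q_e$, $\sigma$ is the rotation about its center $q_e$, so each petal is mapped to itself rather than permuted.
\end{itemize}
Compatibility with (C-2) is the one-line check that the gluing translation from the outer edge of $Q_e$ to $R(e)$ is translation by $2c-2q_e$. Compatibility with (C-1) is as you noted.

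\textbf{The step you flag as the main obstacle rests on a false premise.} After moving the outer half of each petal onto the base (the paper's cut-and-paste), the polygon is indeed centrally symmetric about $c$. However, its sides are not glued to their opposite sides. Condition (C-1) identifies the two sides of a single half-petal, and these are not opposite with respect to $c$. Instead, $R$ carries that pair to the two sides of the corresponding half-petal on the base, which are glued to each other. So the general fact about centrally symmetric polygons with opposite sides identified does not apply. You must verify directly that the gluing pattern is $R$-equivariant, which is the computation above.

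\textbf{The fixed-point count, which you leave open, is uniform in $n$ for the correct $\sigma$.}
\begin{itemize}
\item The inner and outer cone points are swapped for every $n$.
\item No interior point of the center or base is fixed, and no point of the $2n$ edges bounding them is fixed, since $e$ goes to the distinct edge $R(e)$.
\item The fixed points are exactly the $n$ petal centers (the stem center among them) and the $n$ midpoints of the glued side edges of the petals.
\end{itemize}
This gives $2n=2(n-1)+2$ fixed points.

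Your closing ``alternative'' is the same criterion, not a different argument.
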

\begin{proof}
    We first do a cut-and-paste on $\Pi_n$ by cutting half of the petals and gluing them to the base as in the middle photo in Figure \ref{Pi(6,h)}. 
    
    Let $\sigma_n: \Pi_n\to \Pi_n$ be the map that rotates our surface by angle $\pi$ around the center of the stem, which is conformal and acts as an involution. Notice that since $\sigma_n$ rotates every point in $\Pi_n$ by $\pi$ but preserves the edge identifications, any fixed points can only occur at the edges. As in the proof of Theorem \ref{Pi_n genus and stratum}, the vertices of the center and the base are different, so applying $\sigma_n$ still distinguishes between those vertices.  Furthermore, the edge identifications within each half-petal imply that the fixed points here can occur only at the outermost half-petal edges and the stem edges connecting the center and base. Thus, the fixed points are the outermost half-petal vertices, the midpoint of the outermost half-petal/connecting stem edges, and the center of rotation (the center of the stem). Thus, the number of fixed points of $\sigma_n$, is equal to $(n-1)+n+1=2n$. See Figure~\ref{Pi(6,h)} for an example when $n=6$.
    
    Theorem \ref{Pi_n genus and stratum} showed $g=n-1$, and we deduce that $\sigma_n$ has precisely $2n=2(n-1)+2$ fixed points, hence $\Pi_n$ is hyperelliptic.
\end{proof}

\begin{remark}\label{rem:halfishyperelliptic}
    When $n$ is even, we have $\Pi_n ^h$ is hyperelliptic since there is a degree 2 translation covering $p:\Pi_n\to\Pi_n ^h$ from a hyperelliptic surface $\Pi_n$. See Figure~\ref{Pi(6,h)} for an example when $n=6.$
\end{remark}

%%%%%%%%%%%%%%%%%%%%%%%%%%%%%%%%%%%%%%%%%%%%%%%%%%%%%%%%%%%%%%%%%%%%%%%%%%%%%%%%%%%%%%%%%%%%%%%%%%%%%%%%%%%%%%%%%%%%%

\section{Determining the lattice property}

In this section, we prove that the unfoldings $\tilde{P}_n$ are not lattice surfaces unless $n=4$. In particular, $\tilde{P}_4$ is a square-tiled surface, which is a lattice surface because it is a branched cover of the torus with one branched point. We prove $\tilde P_n$ is not a lattice surface unless $n=4$ by showing there are two cylinders with irrational ratio of their moduli. As we will see shortly, it suffices to consider $\Pi_n$.
\begin{prop}\label{IrrationalCylinder}
   Let $n\in\mathbb Z$, $n\ge3$. There is a horizontal cylinder with irrational modulus in $\Pi_n$ for all odd $n$ and a horizontal cylinder with irrational modulus in $\Pi_n ^h$ for all even $n$ other than $4$.
\end{prop}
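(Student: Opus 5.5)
The plan is to exhibit, in each case, an explicit horizontal cylinder and compute its height $h$ and circumference $c$ directly from the regular $n$-gon of side length $1$ and the unit squares. Then I will show that $h/c$ is irrational using a Niven-type fact: for a rational multiple $\theta$ of $\pi$, the number $\tan\theta$ (equivalently $\cot\theta$, or $\tan^2\theta$) is rational only in the degenerate cases $\tan\theta\in\{0,\pm1\}$. The exceptional value $n=4$ should appear exactly as the case $\tan(\pi/4)=1$. This also explains why the petal cylinders from (C-1) are useless: they have circumference $1$, height $1$ and modulus $1$.

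First, I will set up coordinates. The net is oriented so that the bottom edge of the center $n$-gon is horizontal and the stem petal hangs below it. The vertices of the center polygon then sit at heights given by partial sums of $\sin(2\pi k/n)$, and the horizontal chords of the polygon at those heights have lengths that are sums of $\cos(2\pi k/n)$. The petals attached to non-horizontal edges are unit squares rotated by multiples of $2\pi/n$, and their contribution to a horizontal strip is a parallelogram with horizontal widths of the form $1/\sin(2\pi k/n)$ or $\cot(2\pi k/n)$ times the strip height. Using the identifications (C-1) and (C-2), I will follow a horizontal leaf starting just above the bottom horizontal edge of the center polygon. For odd $n$ I use $\Pi_n$; for even $n$ I use $\Pi_n^h$, where the base has been removed and only half-petals remain.

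Second, I will choose the horizontal strip between the bottom edge and the first row of vertices, at height $\sin(2\pi/n)$. In this strip the leaf crosses the center polygon, then by (C-1) re-enters through the adjacent petals, and by (C-2) passes through the base or the stem. I will then check that this leaf closes up, so that the strip is a single horizontal cylinder. Since every vertex involved is a cone point of $\Pi_n$ (or $\Pi_n^h$), its boundary consists of saddle connections. Its height is $h=\sin(2\pi/n)$, possibly truncated by the height of a square. Its circumference $c$ is the total horizontal width, which should be a rational combination of $1$ and $\cos(2\pi/n)$, together with petal contributions of the form $\cot(2\pi/n)$. After simplification, I expect the modulus to be of the form $r\tan(\pi/n)$ or $r\cot(\pi/n)$ with $r\in\mathbb{Q}^{\times}$, possibly after using the half-angle identities.

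Third, I will prove irrationality. Suppose $h/c=q\in\mathbb{Q}$. This becomes a polynomial relation with rational coefficients satisfied by $\zeta=e^{2\pi i/n}$, or by $\tan(\pi/n)$. Using that $\tan^2(\pi/n)$ is an algebraic number of degree $\varphi(n)/2$ or so (degree $>1$ for $n\neq 3,4,6$), and handling $n=3,6$ by hand (where $\tan(\pi/n)=\sqrt3$ or $1/\sqrt3$, clearly irrational), I will conclude that rationality forces $n=4$. For even $n$ the same computation is done in $\Pi_n^h$; the only change is that the strip passes through half-petals instead of the base.

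The main obstacle is the second step: correctly tracing the horizontal leaf through the identifications, which depend on the parity of $n$, and verifying that the chosen strip is exactly one cylinder rather than being cut by a cone point partway. If tracing the bottom strip becomes unwieldy, I would instead use the top strip near the uppermost vertex (for odd $n$) or the top horizontal edge (for even $n$), where by symmetry fewer petals are crossed. Once a clean cylinder is identified, the irrationality step is standard.
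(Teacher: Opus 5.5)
Your outline has the right overall shape (exhibit a horizontal cylinder, compute $h/c$, prove irrationality), but both substantive steps are left undone, and the specific choices you commit to would not work as stated.

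\textbf{The choice of cylinder.} This is the main gap. By (C-1), each petal of $\Pi_n$ is a cylinder of circumference $1$ and height $1$ whose core is parallel to the center edge it is attached to. In $\Pi_n^h$ this cylinder consists of the two half-petals on a pair of opposite edges.

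A horizontal leaf crossing the petal on the edge $e_1$ adjacent to the bottom edge (direction $2\pi/n$) advances $\cot(2\pi/n)$ along the core, so it wraps around this unit-circumference cylinder. Consequently, among the leaves leaving the center polygon at heights $0<y<\sin(2\pi/n)$, exactly one exits the petal at a vertex, hence at a cone point. The height of this leaf is governed by the fractional part of $\cot(2\pi/n)$. This happens unless $\cot(2\pi/n)\in\mathbb{Z}$, i.e.\ unless $n\in\{4,8\}$.

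So your strip is in general not one cylinder. The pieces have dimensions involving $\lfloor\cot(2\pi/n)\rfloor$, so there is no uniform formula in $n$, and the modulus is not of the form $r\tan(\pi/n)$. For example, in $\Pi_6^h$ the lower half of the hexagon is cut at height $(\sqrt3-1)/2$. The upper piece, of height $1/2$ and circumference $4+\sqrt3$, is precisely the paper's cylinder. Your fallback top strip has the same problem, since the edges there are at angle $\pi/n$ or $2\pi/n$ from horizontal. Also note that in $\Pi_n^h$ the outer petal vertices are not cone points, so leaves through them are not singular.

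The missing idea is to work near the horizontal midline of the center polygon. There the petals crossed sit on edges within $\pi/n$ (resp.\ $\pi/(2n)$ for odd $n$) of vertical, so the twist through each petal is less than one circumference. The inequalities $h<h'$ and $\sin\alpha<\cos\alpha$ in the paper are exactly what certify a single cylinder there. For $n\equiv 0\pmod 4$, the petals on the vertical edges have zero twist, giving height $1$ and circumference $1+\cot(\pi/n)$.

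\textbf{The irrationality step.} This is also insufficient. Niven's theorem only handles moduli that are linear-fractional in $\tan(\pi/n)$ over $\mathbb{Q}$, which is why you needed the hoped-for form $r\tan(\pi/n)$. The cylinders the paper actually finds have reciprocal moduli $2(1+\cot(\pi/n)+\cot^2(\pi/n))$ for $n\equiv 2\pmod 4$, and $2\cot(\pi/2n)+\sin^{-2}(\pi/2n)=(1+\cot(\pi/2n))^2$ for odd $n$. These are quadratic in the relevant cotangent. To rule out rationality you need the degree bound $[\mathbb{Q}(\tan(\pi/m)):\mathbb{Q}]=\varphi(\mathrm{lcm}(m,4))/2>2$ for $m=n$ resp.\ $m=2n$, together with direct checks of small cases such as $n=3,6$.

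Your parenthetical claim is also false for $\tan^2$: $\tan^2(\pi/3)=3$ and $\tan^2(\pi/6)=1/3$. So if a modulus depended on $\tan^2(\pi/n)$, the irrationality of $\tan(\pi/3)$ would not rescue the cases $n=3,6$.
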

Assuming this, we can prove the following.
\begin{thm}\label{thm:notlattice}
    The surfaces $\tilde{P}_n$, $\Pi_n$, and $\Pi_n ^h$ are not lattice surfaces unless $n=4$.
\end{thm}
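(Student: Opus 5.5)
The plan is to reduce everything to Proposition \ref{IrrationalCylinder} using Corollary \ref{CorVeech} and Theorem \ref{Thm:TranslationCovering}. The key observation is that an irrational modulus alone proves nothing; Corollary \ref{CorVeech} only forbids incommensurable moduli among parallel cylinders. So the first step is to exhibit, in the same horizontal direction, a second horizontal cylinder with rational modulus. Then the two cylinders have irrational ratio of moduli, and Corollary \ref{CorVeech} shows the surface is not a lattice surface.

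For $\Pi_n$ with $n$ odd, the natural rational-modulus cylinder comes from the petal. Since the net is oriented with a bottom edge horizontal, the stem is a unit square, and by (C-1) its two vertical inner edges are glued together. This makes the stem a horizontal cylinder of circumference $1$ and height $1$, hence modulus $1$. I would check that the boundary edges of the stem are unions of saddle connections, so the stem is genuinely a maximal cylinder, or at least is contained in a horizontal cylinder whose modulus one can compute. If the maximal cylinder turns out to be larger, I would instead compare the irrational cylinder of Proposition \ref{IrrationalCylinder} against another cylinder whose modulus involves a different irrational quantity.

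For $\Pi_n^h$ with $n$ even and $n\neq 4$, I would argue the same way. The half-stem is a rectangle of width $1$ and height $1/2$ with its vertical sides identified, so it is a horizontal cylinder of modulus $1/2$, which is rational. Combined with Proposition \ref{IrrationalCylinder}, Corollary \ref{CorVeech} then shows $\Pi_n^h$ is not a lattice surface.

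Lattice non-property then propagates up the covers. If $\Pi_n$ (or $\Pi_n^h$) is not a lattice surface, then by Theorem \ref{Thm:TranslationCovering} neither is any translation cover of it. The reason is that commensurable groups are simultaneously lattices or not: a finite-index subgroup of a lattice is a lattice, and conjugation preserves being a lattice. For $n$ odd, Proposition \ref{CoveringMap} gives $\tilde P_n\to\Pi_n$. For $n$ even, composing with the degree two cover gives $\tilde P_n\to\Pi_n\to\Pi_n^h$. So in both cases $\Pi_n$ and $\tilde P_n$ fail to be lattice surfaces.

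For $n=4$, I would directly observe that $\Pi_4$ and $\tilde P_4$ are tiled by unit squares, with translations identifying edges. They are therefore square-tiled and hence lattice surfaces, and the same holds for $\Pi_4^h$ as a half-square tiling.

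The main obstacle is the first step: guaranteeing two horizontal cylinders in the same surface with incommensurable moduli. This requires the irrational cylinder from Proposition \ref{IrrationalCylinder} and the stem cylinder to be honest, distinct maximal cylinders. If that fails, I would look at the same direction in the cut-and-paste picture of Figure \ref{Pi(6,h)} to identify the full horizontal cylinder decomposition explicitly.
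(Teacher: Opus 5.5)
Your proposal follows the paper's proof: the stem gives a horizontal cylinder of rational modulus, Proposition~\ref{IrrationalCylinder} gives a parallel cylinder of irrational modulus, Corollary~\ref{CorVeech} rules out the lattice property, and Theorem~\ref{Thm:TranslationCovering} carries non-lattice up the covers $\tilde P_n\to\Pi_n$ (and $\Pi_n\to\Pi_n^h$ for even $n$). One small correction: in $\Pi_n^h$ the half-stem is not itself a cylinder, since its outer edge is glued to the outer edge of the half-petal on the top horizontal edge of the center and the outer vertices are regular points. The actual horizontal cylinder is the union of these two half-petals, with circumference $1$ and height $1$, so its modulus is $1$ rather than $1/2$; this is still rational, so your argument is unaffected.
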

\begin{proof}
    Let $n\in \mathbb Z$ be odd and $n\ge3.$ By Theorem \ref{Thm:TranslationCovering} and Proposition \ref{CoveringMap}, $\SL{\tilde{P}_n}$ is commensurable with $\SL{\Pi_n}$. Hence, it suffices to show $\SL{\Pi_n}$ is not a lattice in $\SL{2,\R}$. Using Corollary \ref{CorVeech}, we only need to find two cylinders in $\Pi_n$ such that the ratio of their moduli are not commensurable. Notice that the stem in $\Pi_n$ is a horizontal cylinder with height and circumference one, and the existence of another horizontal cylinder with irrational modulus is provided by Proposition \ref{IrrationalCylinder}. Therefore, the surface $\Pi_n$ does not satisfy Theorem \ref{VeechDichotomy} (Veech dichotomy), so it is not a lattice surface.

    When $n$ is even and different than 4, an argument similar to the previous paragraph works where we use $\Pi_n ^h$ instead of $\Pi_n$.
\end{proof}

Combining the above with results of the previous sections allows us to prove Theorem \ref{thm:UnfoldingsResult} and Theorem \ref{thm:UnfoldingsVeech}

\begin{proof}(Of Theorem \ref{thm:UnfoldingsResult} and Theorem \ref{thm:UnfoldingsVeech})
   We collect the relevant results: Lemma \ref{lem: unfoldings tilde Pn} provides us with the stratum data of $\tilde P_n$. Proposition \ref{CoveringMap} shows $\tilde P_n$ is never primitive. Theorem \ref{thm:notlattice} shows that $\tilde P_n$ is a lattice surface if and only if $n=4$. Clearly $\tilde P_4$ is a square-tiled surface.
\end{proof}

\begin{figure}
    \centering
    \includegraphics[scale=.6]{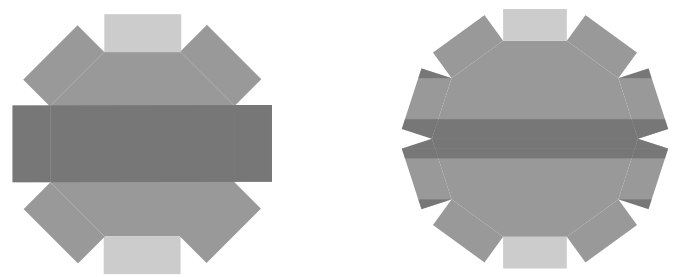}
    \caption{On the left are the two horizontal cylinders for $\Pi_{8} ^h$ and on the right are the cylinders for $\Pi_{10} ^h.$ Note that the outermost vertices are not cone points.} \label{Cyl4k+2}
\end{figure}

Section \ref{EvenCase} and Section \ref{OddCase} aim to verify the existence of the cylinder from Proposition \ref{IrrationalCylinder}, splitting into cases whether $n$ is even or odd. To determine irrationality, we use the following lemma whose proof is given in Appendix \ref{Appendix}. For convenience, we shall compute the reciprocal of the modulus in the proof of Proposition \ref{IrrationalCylinder}. 
\begin{lem}\label{IrrationalityLemma}
    Let $n$ be a positive integer. 
    \begin{enumerate}
        \item $\tan\prt{\frac{\pi}{n}}$ is irrational for $n\in\Z_{\geq 3}\setminus\lset{4}$.
        \item $\lbrkt{\Q\prt{\tan\prt{\frac{\pi}{n}}}:\Q}>2$ for $n\in\Z_{\geq 5}\setminus\lset{6,8,12}$.
    \end{enumerate}
\end{lem}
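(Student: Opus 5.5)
We argue through the algebraic structure of $t=\tan(\pi/n)$, using the standard facts about $\cos(2\pi/n)$ and cyclotomic fields. Put $\theta=\pi/n$ and $\zeta=e^{2\pi i/n}$. Then $\cos(2\theta)=\frac{1-t^2}{1+t^2}$, so $\Q(\cos(2\pi/n))\subseteq\Q(t)$. It is classical that $\Q(\cos(2\pi/n))=\Q(\zeta+\zeta^{-1})$ is the maximal real subfield of $\Q(\zeta)$, of degree $\varphi(n)/2$ over $\Q$ for $n\ge3$. Hence
$$[\Q(t):\Q]\ \ge\ \frac{\varphi(n)}{2},$$
and both parts reduce mostly to lower bounds on $\varphi(n)$, with a few small $n$ checked by hand.

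\textbf{Part (2).} We need $[\Q(t):\Q]>2$. Whenever $\varphi(n)>4$ this follows immediately. The $n\ge5$ with $\varphi(n)\le4$ are exactly $n\in\lset{5,6,8,10,12}$, and $n=6,8,12$ are excluded. For the other two we compute directly.
\begin{itemize}
\item For $n=5$: $\tan(\pi/5)=\sqrt{5-2\sqrt5}$, and $5-2\sqrt5$ is not a square in $\Q(\sqrt5)$ because its norm is $5$, not a rational square. So the degree is $4$.
\item For $n=10$: $\tan(\pi/10)=\sqrt{1-2/\sqrt5}$. The same norm argument applies to $5-2\sqrt5$ after rescaling, giving degree $4$.
\end{itemize}
Alternatively, one can note that $t^2=\frac{1-\cos2\theta}{1+\cos2\theta}$ generates $\Q(\cos(2\pi/n))$ of degree $2$. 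Then $\Q(t)$ has degree $2$ or $4$, and one rules out $t\in\Q(\sqrt5)$ by the norm computation above.

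\textbf{Part (1).} For irrationality, the degree bound gives $[\Q(t):\Q]\ge2$ whenever $\varphi(n)\ge4$, which covers every $n$ except $n\in\lset{3,4,6}$. For $n=3$ we have $t=\sqrt3$, and for $n=6$ we have $t=1/\sqrt3$; both are irrational. The case $n=4$ gives $t=1$, which is why it is excluded.

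\textbf{Main obstacle.} The delicate step is the residual cases $n=5$ and $n=10$ in part (2), where the cyclotomic bound only gives degree $\ge2$. There one must show $t$ genuinely generates a quartic field, via the norm computation showing $t^2$ is not a square in $\Q(\sqrt5)$. The rest is the standard identification of $[\Q(\cos(2\pi/n)):\Q]$, which I would cite rather than reprove.
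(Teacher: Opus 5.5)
Your proof is correct, but it takes a different route from the paper's.

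\textbf{The paper's route.} For part (1) the paper reduces to prime divisors of $n$ (and to $8$) via the tangent addition formula. It disposes of $3$, $5$, $8$ by explicit values, and of primes $p\ge 7$ by comparing with $[\Q(\zeta_{2p}):\Q]=p-1$. For part (2) it again passes to prime divisors and asserts that $\tan(\pi/5)$ has degree $4$. For $n=2^a3^b$ it uses $\tan(\pi/n)=\frac{\zeta_n-1}{i(\zeta_n+1)}$ to get $\Q(\tan(\pi/n),i)=\Q(\zeta_{\lcm(n,4)})$, hence the exact degree $\varphi(\lcm(n,4))/2$.

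\textbf{Your route.} You use the single inclusion $\Q(\cos(2\pi/n))\subseteq\Q(\tan(\pi/n))$ together with the classical degree $\varphi(n)/2$ of the maximal real cyclotomic subfield. This gives a uniform lower bound with no prime-factor reduction, leaving only $n=3,6$ and $n=5,10$ to check by hand. Your norm computation in $\Q(\sqrt5)$ also justifies the degree of $\tan(\pi/5)$, which the paper only asserts.

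\textbf{What each buys.} The price of your route is sharpness. Your bound equals the true degree only when $4\mid n$ and is off by a factor of $2$ otherwise. That is exactly why your four small cases, all with $4\nmid n$, need separate treatment. The paper's identity holds for every $n\ge 3$, so it would settle both parts at once: the degree is $\le 2$ iff $\lcm(n,4)\in\{4,8,12\}$, and equals $1$ iff $\lcm(n,4)=4$. The paper also reuses this exact formula in the odd case of Proposition~\ref{IrrationalCylinder} to control $\tan(\pi/2n)$.
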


\subsection{The case for even $n$ other than 4}\label{EvenCase} Our strategy is to construct the cylinder universally. Since the choice of the cylinder depends on the geometric properties of the polygons and edge identifications, we shall consider the cases of $n\in4\Z_{\geq2}$ and $n\in4\Z_++2$.
\begin{proof}[Proof of Proposition \ref{IrrationalCylinder} for even $n$ other than $4$]
    We consider the following two cases.
    \begin{enumerate}
        \item \textbf{Case 1: $n\in4\Z_{\geq2}$.} We choose the horizontal cylinder that contains the middle horizontal strip of the base and the middle horizontal strip of the center with two attached petals. See the left polygon in Figure~\ref{Cyl4k+2}.
        
        The height of this cylinder is the length of a petal, which is $1$. As such, we need to compute the horizontal circumference of this cylinder. This amounts to recalling that \emph{half} of the width of the regular $n$-gon is $\frac{1}{2\tan (\frac{\pi}{n})}$. Since each petal has width 1/2, we obtain a total circumference of $1+\frac{1}{\tan\prt{\frac{\pi}{n}}}$. Therefore,  the reciprocal of its modulus is $1+\frac{1}{\tan\prt{\frac{\pi}{n}}}$, an irrational number by the first item of Lemma \ref{IrrationalityLemma}. 

    \begin{figure}
    \centering
    \includegraphics[scale=.4]{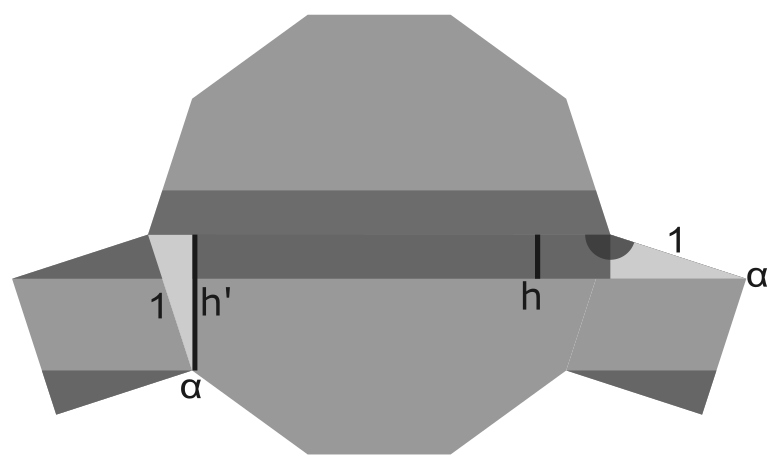}
    \caption{The triangles needed in Case 2 of Proposition \ref{IrrationalCylinder} even} \label{Triangle4k+2}
    \end{figure}
        
        \item \textbf{Case 2: $n\in4\Z_++2$.}
        We note that for $n=4k+2$, there is a horizontal line that cuts the center $n$-gon in half and we choose the horizontal cylinder just below this line.  We also do a cut and paste of the half petals just above the horizontal line that cuts the center $n$-gon in half and move them just below the horizontal line. See Figure~\ref{Cyl4k+2}.

        To prove that this cylinder is well defined, it suffices to show that $h<h'$ in general (See Figure~\ref{Cyl4k+2}) where these are heights on two triangles. The first triangle has a side of length  
        $h$ and is formed by taking the right vertex of the center $n$-gon, that we denote $v$, the vertex of the petal emanating one unit to the right of $v$ downwards, and the vertex that lies at the intersections of the vertical ray below $v$ with the horizontal ray to the right of the second vertex. The second triangle has a side length $h'$  and has vertices the  left most vertex of the center $n$-gon $v'$, and the next vertex (going counterclockwise) $v''$, along with the intersection of the horizontal ray going right from $v'$ with the vertical ray going up from $v''$. Of course, $h$ is just the height of the horizontal cylinder below the horizontal line that cuts the center $n$-gon in half, but once we show that $h<h'$, then we have this cylinder is defined.

        To compute $h$, we will show, by using the triangle in Figure~\ref{Triangle4k+2}, $h=\sin(\alpha)$. This triangle has hypotenuse 1. We compute $\alpha$ as follows: the interior angle of a regular $n$-gon is $\frac{(n-2)\pi}{n}$. As such, \emph{half} of this along with the addition of the angle $\pi/2$ from the stem gives the shaded circle arc in Figure~\ref{Triangle4k+2} a total angle of $\frac{(n-1)\pi}{n}$. Since this and $\alpha$ are adjacent angles to the non-parallel sides of the trapezoid in Figure~\ref{Triangle4k+2}, and since such angles add up to $\pi$, we conclude $\alpha=\frac{\pi}{n}$.

        To compute $h'$, we consider the triangle it lies on (See Figure~\ref{Triangle4k+2}). We note that one angle of the triangle is $\pi/2$ and the \emph{half} angle about the left-most vertex of the center $n$-gon is $\frac{(n-2)\pi}{2n}$. Since the sum of the angles is $\pi$, we see that the remaining angle is $\alpha=\frac{\pi}{n}$. In particular, $h' = \cos(\frac{\pi}{n})$.

        Since $n\ge6$ is even, and cosine is smaller than sine for positive angles less than $\pi/4$, then we see $h=\sin(\frac{\pi}{n})<\cos(\frac{\pi}{n})=h'$. This proves that this cylinder exists.

         As such, we can now compute the dimension of this cylinder from Figure~\ref{Cyl4k+2}. The height  was shown to be $\sin\prt{\frac{\pi}{n}}$ and, computing the circumference, we see it is $\frac{2}{\sin\prt{\frac{\pi}{n}}}+2\cos\prt{\frac{\pi}{n}}$. Therefore, the reciprocal modulus of this cylinder is $$\frac{\frac{2}{\sin\prt{\frac{\pi}{n}}}+2\cos\prt{\frac{\pi}{n}}}{\sin\prt{\frac{\pi}{n}}}=2\prt{1+\cot\prt{\frac{\pi}{n}}+\cot^2\prt{\frac{\pi}{n}}}.$$ When $n=6$, the reciprocal modulus is $8+2\sqrt{3}$, which is irrational. For $n\in4\Z_{\geq2}+2$, if the reciprocal modulus is rational, then $\tan\prt{\frac{\pi}{n}}=\frac{1}{\cot\prt{\frac{\pi}{n}}}$ is an algebraic number of degree $\leq2$, which contradicts the second item of Lemma \ref{IrrationalityLemma}.
    \end{enumerate}
\end{proof}

\subsection{The case for odd $n\ge3$}\label{OddCase}

    \begin{wrapfigure}{l}{0.5\textwidth}
    \includegraphics[scale=.4]{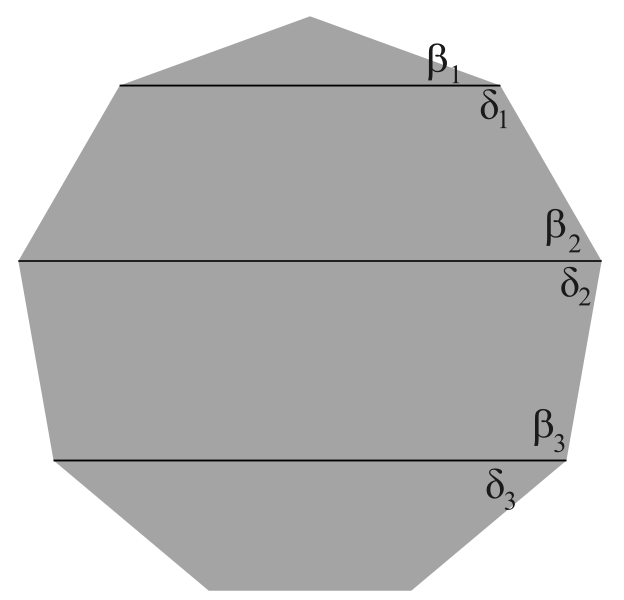}
    \caption{The layers along with the angles $\beta_{i,9}$ and $\delta_{i,9}$ on $\Pi_9.$} \label{betas}
    \end{wrapfigure}
    We will need to label and work with certain angles in the course of constructing a second cylinder with irrational modulus when $n$ is odd. To this end, we consider the horizontal line between vertices of the center $n$-gon that are on opposite sides of the vertical. This will divide the center $n$-gon into \emph{layers}. These lines divide the interior angle of the center $n$-gon into two and we define the angle above the horizontal of each layer to be $\beta$ and below the horizontal to be $\delta$.

    More formally, for each vertex of the center $n$-gon below the vertex at the ``top", there is another vertex along the same horizontal. This divides the center $n$-gon into $\frac{n-1}{2}$ layers. Calling the first horizontal line from the ``top" the bottom of the first layer, this horizontal splits the interior angle of the center $n$-gon into an angle above the horizontal, that we denote by $\beta_{1,n}$ and below that we denote by $\delta_{1,n}$. Continuing in this fashion to the next layer below, we obtain a sequence of angles $\lset{\beta_{j,n}}_j$ and $\lset{\delta_{j,n}}_j$. The angles are symmetric along the vertical edge. See Figure~\ref{betas}.

We first compute these angles:
\begin{lem}\label{AnglesOfIterations}
    For any odd $n>3$, the general term formulas of the angles are \begin{align*}
    \beta_{j,n}&=\frac{\prt{2j-1}\pi}{n}\ \text{and}\ \delta_{j,n}=\frac{\prt{n-2j-1}\pi}{n}
    \end{align*} for $j\in\lset{1,\dots, 2k}$ when $n=4k+3$ or $j\in\lset{1,\dots,2k-1}$ when $n=4k+1$. When $n=3$, there is only 1 layer. In this case and $\beta_{1,3}=\pi/3$ and $\delta_{1,3}=0$.
\end{lem}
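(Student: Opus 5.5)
The plan is to compute the angles directly from the fact that the vertices of the center $n$-gon lie on a circle, and then to check the answer with a parallel-lines recursion. Since $n$ is odd and the net is oriented so that a bottom edge is horizontal, the center $n$-gon has a single vertex $V_0$ at the ``top'', and the polygon is symmetric about the vertical line through $V_0$. Label the vertices going down the left side as $V_1,V_2,\dots$ and their mirror images on the right as $V_{-1},V_{-2},\dots$. By symmetry, $V_j$ and $V_{-j}$ lie on the same horizontal line, and this line is the bottom of the $j$-th layer. By definition, $\beta_{j,n}$ is the angle at $V_j$ between the edge $V_jV_{j-1}$ and the chord $V_jV_{-j}$, and $\delta_{j,n}$ is the angle at $V_j$ between that chord and the edge $V_jV_{j+1}$. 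By the symmetry, it suffices to work at the left vertex $V_j$.

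\textbf{Step 1 (compute $\beta_{j,n}$).} Use the inscribed angle theorem on the circumcircle. The angle $\beta_{j,n}$ is inscribed at $V_j$ and subtends the arc from $V_{j-1}$ through $V_0$ to $V_{-j}$ that does not contain $V_j$. This arc consists of $(j-1)+j=2j-1$ sides of the $n$-gon, so its central angle is $(2j-1)\cdot 2\pi/n$. Therefore
\[
\beta_{j,n}=\frac{(2j-1)\pi}{n}.
\]

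\textbf{Step 2 (compute $\delta_{j,n}$).} Since $\beta_{j,n}+\delta_{j,n}$ is the interior angle $\frac{(n-2)\pi}{n}$, subtracting gives
\[
\delta_{j,n}=\frac{(n-2)\pi}{n}-\frac{(2j-1)\pi}{n}=\frac{(n-2j-1)\pi}{n}.
\]

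\textbf{Step 3 (consistency check by recursion).} As an independent check, use that consecutive layer lines are parallel horizontals and that the edge $V_jV_{j+1}$ is a transversal. The co-interior angles then give $\beta_{j+1,n}=\pi-\delta_{j,n}$. The base case $\beta_{1,n}=\pi/n$ follows because the top triangle $V_1V_0V_{-1}$ is isosceles with apex angle $\frac{(n-2)\pi}{n}$. Induction then reproduces both formulas.

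\textbf{Step 4 (index ranges and $n=3$).} Finally, I would check that the stated index ranges are exactly those $j$ for which the $j$-th horizontal passes through two distinct vertices of the center $n$-gon that sit above the bottom edge. For $n=3$ there is one layer, and the formulas give $\beta_{1,3}=\pi/3$ and $\delta_{1,3}=0$, since the layer line is the bottom edge itself.

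I expect the main obstacle to be this bookkeeping rather than the trigonometry. The endpoint of the range depends on $n \bmod 4$ through how the layers interact with the horizontal bottom edge, so I would verify it carefully on $n=5,7,9$ against Figure~\ref{betas}.
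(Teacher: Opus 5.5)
Your proof is correct, but the main computation takes a different route from the paper's.

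\textbf{The paper's route.} The paper argues purely by recursion. It gets $\beta_{1,n}=\pi/n$ from the isosceles top triangle. It then iterates $\beta_{j,n}+\delta_{j,n}=\frac{(n-2)\pi}{n}$ together with the co-interior relation $\beta_{j,n}=\pi-\delta_{j-1,n}$ in each trapezoidal layer. This gives $\beta_{j,n}=\beta_{j-1,n}+\frac{2\pi}{n}$, which is exactly your Step~3.

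\textbf{Your route.} Your Step~1 instead obtains $\beta_{j,n}$ in closed form from the inscribed angle theorem, by counting the $2j-1$ sides on the arc from $V_{j-1}$ through $V_0$ to $V_{-j}$.

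\textbf{What each buys.} Your argument needs no induction and holds at every vertex at once. That includes the bottom edge $j=\frac{n-1}{2}$, where it gives $\beta=\frac{(n-2)\pi}{n}$ and $\delta=0$, so the case $n=3$ is not really special. It also avoids the kind of slip in the paper's displayed intermediate step: there $\beta_{2,n}$ is written as $\pi/n$, but it should be $\pi-\delta_{1,n}=3\pi/n$. The paper's recursion, for its part, uses only parallel lines. It matches the trapezoid picture reused later in the proof of Proposition~\ref{IrrationalCylinder}.

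\textbf{Step~4.} The bookkeeping you expect to be the main obstacle is not one. In both residue classes the stated range is just $1\le j\le \frac{n-3}{2}$: this equals $2k$ for $n=4k+3$ and $2k-1$ for $n=4k+1$. So the range consists of all horizontal chords strictly above the bottom edge, and the split by $n \bmod 4$ is purely notational.
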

\begin{proof}
    The interior angle of a regular $n$-gon with degree $\frac{\prt{n-2}\pi}{n}$. Since the layers are chosen to divide the interior angle of the trapezoid, we naturally have 
    \begin{equation}\label{eqn: intangle}
        \frac{\prt{n-2}\pi}{n}=\beta_{j,n}+\delta_{j,n}.
    \end{equation} For $j>1$, each layer forms a symmetric trapezoid and so since the adjacent angles $\delta_{j-1,n}$ and $\beta_{j,n}$ to the non-parallel sides must add up to $\pi$, we have  
    \begin{equation}\label{eqn: trapezoid}
    \beta_{j,n}=\pi-\delta_{j-1,n}.
    \end{equation}

    We repeatedly utilize Equation ~\ref{eqn: intangle} and Equation ~\ref{eqn: trapezoid}. We demonstrate the first few steps below.
    
    The first layer forms an isosceles triangle at the top (see Figure~\ref{betas}), which yields $$\beta_{1,n}=\frac{\pi-\frac{\prt{n-2}\pi}{n}}{2}=\frac{\pi}{n}.$$ As such, we can compute $$\delta_{1,n} = \frac{\prt{n-2}\pi}{n}-\beta_{1,n}=\frac{\prt{n-2}\pi}{n}-\frac{\pi}{n} = \frac{(n-3)\pi}{n}.$$

    Using Equation ~\ref{eqn: trapezoid}, we see that $$\beta_{2,n}=\pi - \frac{(n-1)\pi}{n}=\frac{\pi}{n}.$$
    
    Continuing in this fashion, we obtain \begin{align*}
        \beta_{j,n}&=\pi-\frac{\prt{n-2}\pi}{n}+\beta_{j-1,n}=\prt{j-1}\prt{\pi-\frac{\prt{n-2}\pi}{n}}+\beta_{1,n}=\prt{j-1}\frac{2\pi}{n}+\frac{\pi}{n}=\frac{\prt{2j-1}\pi}{n},
    \end{align*}and \begin{align*}
        \delta_{j,n}=\frac{\prt{n-2}\pi}{n}-\frac{\prt{2j-1}\pi}{n}=\frac{\prt{n-2j-1}\pi}{n}.
    \end{align*}
\end{proof}

    \begin{figure}
    \centering
    \includegraphics[scale=.4]{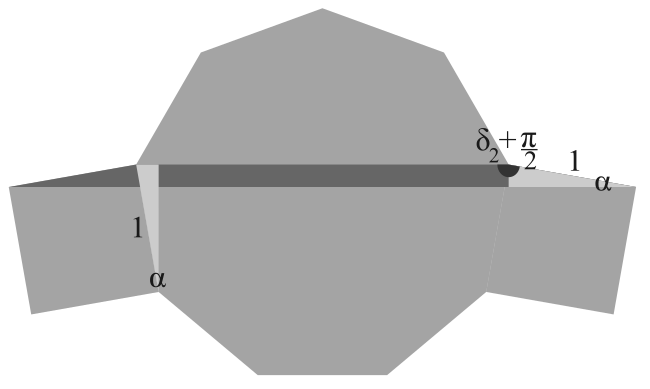}
    \caption{The triangles needed in Case 1 of Proposition \ref{IrrationalCylinder} odd} \label{OddTriangle4l+1}
    \end{figure}
We now continue the proof of Proposition \ref{IrrationalCylinder}:
\begin{proof}[Proof of Proposition \ref{IrrationalCylinder} for odd $n\geq3$] We consider the following two cases.
\begin{enumerate}
    \item \textbf{Case 1: $n\in4\Z_++1$.} Informally, the cylinder we choose is the first petal that goes from pointing ``upwards" to ``downwards".
    
    More precisely, along the vertices of the center $n$-gon, $n=4k+1$, we have $2k$ horizontal layers and choose the horizontal cylinder that has a horizontal saddle connection contained on the bottom of the $k$th layer. This saddle connection will lie on the top of the horizontal cylinder. Consider Figure ~\ref{OddTriangle4l+1} for the case $n=9$ as a reference.

     To prove that this cylinder is well defined, we consider two triangles that are similar to Case 2 of $n=4k+2$ even. The first triangle is formed by taking the right vertex of the center $n$-gon, that we denote $v$, the vertex of the petal emanating one unit to the right of $v$, and the horizontal line that backtracks to  $v$. We also consider the triangle formed by the left-most vertex of the center $n$-gon, the next vertex (going counterclockwise) on the center $n$-gon, and then backtracking vertically. Each triangle shares a common angle $\alpha$ and if $\cos(\alpha)>\sin(\alpha)$, then the cylinder is well defined.

    By Lemma \ref{AnglesOfIterations}, we have
    $$\delta_{k,n}=\frac{\prt{n-2k-1}\pi}{n}=\frac{\prt{4k+1-2k-1}\pi}{4k+1}=\frac{\pi}{2}-\frac{\pi}{2n}.$$
    As such, the angle below the horizontal that makes the $k$th layer is $\delta_{k,n}+\pi/2$. Since $\delta_{k,n}+\pi/2$ and $\alpha$ are adjacent angles to the non-parallel sides of the trapezoid in Figure~\ref{Triangle4k+2}, and since such angles add up to $\pi$, we conclude 
    $$\alpha=\pi-(\delta_{k,n}+\frac{\pi}{2})=\frac{\pi}{2}-\delta_{k,n}=\frac{\pi}{2n}.$$

    Since $n>3$, we have $\cos(\alpha)>\sin(\alpha)$ and so the cylinder is well defined. Since $\delta_{k,n}<\frac{\pi}{2}-\frac{\pi}{n}$, the circumference of the cylinder is $2\prt{\cos\prt{\frac{\pi}{2n}}+\frac{\cos\prt{\frac{\pi}{2n}}}{\sin\prt{\frac{\pi}{n}}}}$. Therefore, the reciprocal modulus is $2\prt{\cot\prt{\frac{\pi}{2n}}+\frac{1}{2\sin^2\prt{\frac{\pi}{2n}}}}$. If the reciprocal modulus is rational, then $\tan\prt{\frac{\pi}{2n}}$ is  of degree at most $2$ over $\Q$. By \ref{tanvalue}, we have, for odd $n$, $$\phi\prt{4n}=\phi\prt{\lcm\prt{2n,4}}=2\lbrkt{\Q\prt{\tan\prt{\frac{\pi}{2n}}}:\Q}\leq4,$$ where $\phi$ is Euler's totient function. Thus, there are nine cases where the Euler totient function is less than or equal to 4. That is, when $4n\in\lset{1,2,3,4,5,6,8,10,12}$, but $4n\geq 20$ for $n\in4\Z_++1$, a contradiction.
    %%%%%%%%%%%%%%%%%%%%%%%%%%%%%%%%%%%%%%%%%%%%%%%%%%%%%%%%%%%%%%%%%%%%%%%%%%%%%%%%%%%%%%%
    \item \textbf{Case 2: $n\in4\Z_{\geq0}+3$.} 
    Informally, the cylinder we choose is the first petal pointing ``upwards"  such that the subsequent petal points ``downwards". This is slightly different from our previous choices because $n=3$ has no petals on the center that point ``downwards".
    
    More precisely,  we separate the center $n=4k+3$-gon into $2k+1$ horizontal layers. We choose the horizontal cylinder that has a horizontal saddle connection contained on the bottom of the $k+1$th layer. 

    We prove this cylinder is well defined by considering two triangles and computing their relative heights. The first triangle has vertices given by the right-most vertex $v$, the vertex of the petal emanating one unit to the right of $v$ upwards,  and the vertex that lies at the intersection of the vertical ray up from $v$ with the horizontal ray left of the second vertex. The second triangle is chosen so that one vertex is the left-most point of the center $n$-gon, the next vertex (going clockwise) on the center $n$-gon, and then backtracking vertically down.

    As before, each triangle shares a common angle $\alpha$ and if $\cos(\alpha)>\sin(\alpha)$, then the cylinder is well defined.  By Lemma \ref{AnglesOfIterations}, we have 
    $$ \beta_{k+1,n}=\frac{\prt{2k+1}\pi}{n}.$$ 
    By taking into account the angle from the stem of $\pi/2$ and utilizing that adjacent angles to the non-parallel sides of a trapezoid add up to $\pi$, we deduce 
    and consider 
        $$\alpha=\frac{\pi}{2}-\beta_{k+1,n}=\frac{\pi}{2n}.$$
     As such, the cylinder is well defined, and we can compute the circumference to be
     $$2\prt{\cos\prt{\beta_{k+1,n}}+\frac{\cos\prt{\beta_{k+1,n}-\prt{\frac{\pi}{2}-\frac{\pi}{n}}}}{\sin\prt{\frac{\pi}{n}}}}=2\prt{\cos{\prt{\frac{\pi}{2n}}}+\frac{\cos\prt{\frac{\pi}{2n}}}{\sin\prt{\frac{\pi}{n}}}}.$$ Therefore, the reciprocal modulus is $2\prt{\cot{\prt{\frac{\pi}{2n}}}+\frac{1}{2\sin^2\prt{\frac{\pi}{2n}}}}$. If the reciprocal modulus is rational, then the same argument as in the case $n\in4\Z_++1$ would give $4n\in\lset{1,2,3,4,5,6,8,10,12}$. Since $n\in4\Z_{\geq0}+3$, the only possible $n$ for the reciprocal modulus to be rational is when $n=3$. Yet, a direct computation shows that the corresponding reciprocal modulus is $4+2\sqrt{3}$, a contradiction.
\end{enumerate}
\end{proof}

%%%%%%%%%%%%%%%%%%%%%%%%%%%%%%%%%%%%%%%%%%%%%%%%%%%%%%%%%%%%%%%%%%%%%%%%%%%%%%%%%
\section{Primitivity and consequences}
In this section we show that $\Pi_n$ is primitive for odd $n\ge3$ and $\Pi_n ^h$ is primitive for even $n\ge5$. The proof of primitivity is inspired by a similar argument from Hooper \cite{MR3071661}. Primitivity and hyperellipticity will allow us to use the classification of Apisa \cite{MR3769676} of hyperelliptic components of strata to deduce information about orbit closure and asymptotics on the number of saddle connections on these surfaces.

We recall the notion of \emph{balanced covering}.  A covering $p:(X,\omega)\to(Y,\eta)$ of translation surfaces is a balanced covering if the image of every zero of $\omega$ is a zero of $\eta$. We require the following result of Hooper \cite{MR3071661}.

\begin{prop}
     Let $(X,\omega)$ be a translation surface which does not cover a torus. Let $p : X \to Y$ be the unique covering of a primitive translation surface $(Y, 
     \eta)$ guaranteed by Theorem \ref{thm:Moller}. If the collection of affine automorphisms $\text{Aff}(X, \omega)$ acts transitively on the zeros of $\omega$, then $p$ is a balanced covering.
\end{prop}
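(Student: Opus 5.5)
We prove the statement by showing that the primitive quotient map $p$ must send zeros to zeros, arguing via the action of $\mathrm{Aff}(X,\omega)$ on $\Sigma(X)$. Since $X$ does not cover a torus, the primitive surface $Y$ guaranteed by Theorem \ref{thm:Moller} has genus at least $2$. Hence $Y$ has at least one zero, and by Theorem \ref{thm:Moller} the covering $p:X\to Y$ is unique.

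\textbf{Step 1: affine automorphisms descend.} Let $\phi\in\mathrm{Aff}(X,\omega)$ with derivative $A\in\SL{2,\R}$. Then $p\circ\phi^{-1}:(X,\omega)\to(A\cdot Y, A\cdot\eta)$ is a translation covering, where $A\cdot Y$ is primitive since primitivity is $\SL{2,\R}$-invariant. There is also the covering $p:(X,\omega)\to (Y,\eta)$, and $(X,\omega)=A\cdot(X,\omega)$ covers $A\cdot Y$ via $A\cdot p$. We apply uniqueness in Theorem \ref{thm:Moller} to the two primitive coverings of $(X,\omega)$, namely $p$ and $p\circ\phi^{-1}$ viewed after identifying $A\cdot(X,\omega)$ with $(X,\omega)$. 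This yields an affine map $\psi:Y\to Y$ with derivative $A$ and $p\circ\phi=\psi\circ p$. The point needing care is that ``unique'' in Theorem \ref{thm:Moller} means unique up to translation isomorphism of the base. With that reading, uniqueness produces exactly an isomorphism $\psi$ intertwining the two maps. This descent step is the main obstacle, and we would write out this identification explicitly.

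\textbf{Step 2: transitivity gives balance.} Because $\psi$ is affine, it maps $\Sigma(Y)$ to itself, since cone angles are preserved. Pick a zero $y_0\in\Sigma(Y)$. Since $p$ is a branched covering onto $Y$ that is locally a translation, the cone angle at a point $x\in p^{-1}(y_0)$ is a positive multiple of the cone angle $>2\pi$ at $y_0$. Hence $x$ is a zero of $\omega$, so $p^{-1}(\Sigma(Y))\cap\Sigma(X)\neq\emptyset$; fix such an $x_0$. Now let $x\in\Sigma(X)$ be arbitrary. By transitivity there is $\phi\in \mathrm{Aff}(X,\omega)$ with $\phi(x_0)=x$. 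Then
\[p(x)=p(\phi(x_0))=\psi(p(x_0))\in\psi(\Sigma(Y))=\Sigma(Y).\]
So every zero of $\omega$ maps to a zero of $\eta$, which means $p$ is balanced.
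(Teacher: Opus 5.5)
Your argument is correct. You descend each $\phi\in\mathrm{Aff}(X,\omega)$ to an affine $\psi$ of $Y$ using the uniqueness in Theorem \ref{thm:Moller}, correctly read as uniqueness up to a translation isomorphism of the base that intertwines the maps, and then combine transitivity with the fact that preimages of zeros of $\eta$ are automatically zeros of $\omega$ to get balancedness. The paper gives no proof of its own here, quoting the result from Hooper, and your descent-plus-transitivity argument is the natural standard proof of it.
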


\begin{prop}\label{prop:balanced}
    Let $n\ge3$. 

    If $n$ is odd, then $p:\Pi_n\to Y $ is balanced where $Y$ is the primitive surface guaranteed by Theorem \ref{thm:Moller}.

    If $n$ is even and $n\ne4$, then $p:\Pi_n ^h\to Y $ is balanced where $Y$ is the primitive surface guaranteed by Theorem \ref{thm:Moller}.
\end{prop}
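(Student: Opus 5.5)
We will apply the preceding result of Hooper to $X=\Pi_n$ for $n$ odd and to $X=\Pi_n^h$ for $n$ even with $n\ne 4$. That result has two hypotheses, and we check them in turn.

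\textbf{The surface does not cover a torus.} Suppose there were a translation covering $X\to T$ onto a torus. Then $\SL{X}$ would be commensurable with the Veech group of $T$ by Theorem \ref{Thm:TranslationCovering}, and the Veech group of a torus is a lattice. So $X$ would be a lattice surface, contradicting Theorem \ref{thm:notlattice}.

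A more concrete alternative is also available. A translation cover of a torus has all periods in a rank-two lattice $\Lambda\subset\C$. The stem gives a horizontal cylinder of circumference $1$, so $1\in\Lambda$. The cylinder from Proposition \ref{IrrationalCylinder} has core curve length $\ell$, and its inverse modulus is irrational; $\ell$ would also have to be a horizontal period. Combining these with the vertical period of the stem forces $\Lambda$ to contain too many independent real periods. The Veech-group argument is cleaner, so we take that one.

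\textbf{Transitivity on zeros.} For $n$ even, $\Pi_n^h\in\mathcal H(n-2)$ has a single zero, so transitivity is automatic. For $n$ odd, $\Pi_n\in\mathcal H(n-2,n-2)$ has exactly two zeros, the \emph{inner point} (the image of the vertices of the center $n$-gon) and the \emph{outer point} (the image of the vertices of the base $n$-gon). We claim the hyperelliptic involution $\sigma_n$ from Proposition \ref{prop:hyperelliptic} interchanges them.

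Indeed, $\sigma_n$ is rotation by $\pi$ about the center of the stem. The stem is the square connecting the center to the base, so this rotation swaps the center-side edge of the stem with its base-side edge. Hence it carries a vertex of the center $n$-gon to a vertex of the base $n$-gon.

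Since $\sigma_n$ has derivative $-\mathrm{Id}$, it lies in $\text{Aff}(\Pi_n)$. The group $\langle\sigma_n\rangle$ therefore acts transitively on the two zeros.

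The main point to verify carefully is that $\sigma_n$ really exchanges the two cone points and does not fix each of them. In the proof of Proposition \ref{prop:hyperelliptic}, all $2n=2g+2$ fixed points of $\sigma_n$ were identified explicitly: the outer half-petal vertices, the midpoints of edges, and the center of the stem. None of these is a cone point of $\Pi_n$, because the outermost half-petal vertices become regular points after the cut-and-paste.

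This is consistent with the structure of the stratum. In $\mathcal H(g-1,g-1)^{\mathrm{hyp}}$ the hyperelliptic involution swaps the two zeros, and here $n-2=g-1$ with $g=n-1$. The same conclusion follows directly from the stem-rotation picture.

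With both hypotheses in place, Hooper's proposition gives that the covering $p$ onto the primitive surface of Theorem \ref{thm:Moller} is balanced. That surface exists and is unique here because its genus exceeds $1$, since $X$ does not cover a torus.
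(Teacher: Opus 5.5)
Your argument is essentially the paper's proof: you rule out a torus cover because the surface is not a lattice surface, and you get transitivity on zeros from the minimal stratum ($n$ even) or from $\sigma_n$ exchanging the inner and outer points ($n$ odd, which you justify via the stem rotation more explicitly than the paper does), then apply Hooper's proposition. One caution, which applies equally to the paper: Gutkin--Judge only gives commensurability with the Veech group of the torus marked at the branch values, and that group need not be a lattice, so non-lattice alone does not rule out torus covers branched over several points; a watertight first step uses absolute periods instead (the petal core curves already generate $\Z[\zeta_n]$, of rank $\phi(n)>2$ for $n\notin\{3,6\}$, and for $n=3,6$ a loop through two petals gives an extra independent period) --- this is where your discarded alternative was heading, though the stem's vertical segment you use there is only a relative period.
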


\begin{proof}
    First note that for $n\ne4$, Theorem \ref{thm:notlattice} shows that neither $\Pi_n$ nor $\Pi_n^h$  cover a torus since they are not lattice surfaces. The proposition is now an immediate consequence of hyperellipticity (Proposition \ref{prop:hyperelliptic} and Remark \ref{rem:halfishyperelliptic}). Indeed, since the surfaces we consider are hyperelliptic, then $-\text{Id}\in\text{Aff}(\Pi_n)$ and this involution swaps the two zeros. In particular, when $n$ is odd, then $p$ is balanced. If $n$ is even, then $\Pi_n ^h$ is in the minimal stratum and so $p$ is balanced.
\end{proof}

\begin{thm} \label{thm:primitive}
The surface $\Pi_n$ is primitive for odd $n\ge3$ and $\Pi_n ^h$ is primitive for even $n\ge5$.
\end{thm}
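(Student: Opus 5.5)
Let $X$ denote $\Pi_n$ for odd $n\ge3$, and $\Pi_n^h$ for even $n\ge5$ (so $n\ge6$). By Theorem \ref{thm:Moller} and Theorem \ref{thm:notlattice}, $X$ admits a translation covering $p:X\to Y$ onto a primitive surface $Y$, and $Y$ is not a torus. If $Y$ were a torus, $X$ would be a torus cover, so its Veech group would be commensurable to a lattice by Theorem \ref{Thm:TranslationCovering}; but $X$ is not a lattice surface. Hence $g(Y)\ge2$. By Proposition \ref{prop:balanced}, $p$ is balanced, so $p(\Sigma(X))\subseteq\Sigma(Y)$. The plan is to show that $\deg p=1$ using Riemann--Hurwitz together with the balanced condition.

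\textbf{Even case.} Here $X=\Pi_n^h\in\mathcal H(n-2)$ has a single zero $z$, of cone angle $2\pi(n-1)$, and genus $n/2$. Every point of $Y$ with cone angle $>2\pi$ lifts to points of cone angle $>2\pi$ unless it is a branch point in a suitable sense. More simply, every zero of $Y$ has all its preimages at cone points of $X$, since the local degree at a preimage of cone angle $2\pi$ would force angle $2\pi/e<2\pi$, which is impossible. Since $X$ has only the one cone point $z$, the fibre $p^{-1}(\Sigma(Y))=\{z\}$, so $Y$ has a single zero $w$ and $p$ is totally ramified there: $e_z=d=\deg p$.

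Comparing cone angles gives $2\pi(n-1)=d\cdot 2\pi(m+1)$, where $w$ has order $m$. Since $p$ is unramified away from $\Sigma$ (translation covers branch only over cone points, and balancedness localizes everything to $w$), Riemann--Hurwitz yields
\[
2g(X)-2=d(2g(Y)-2)+(d-1).
\]
Using $2g(Y)-2=m$, this becomes $n-2=dm+d-1$, which agrees with the angle equation. So I additionally need $g(Y)\ge2$ to mean $m\ge2$, giving $d\mid n-1$ and $m=(n-1)/d-1$.

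To rule out $d>1$, I would use hyperellipticity: the hyperelliptic involution of $X$ descends to $Y$. Alternatively, I would count horizontal cylinders. Every horizontal cylinder of $X$ maps onto a cylinder of $Y$, and circumferences scale by integers dividing $d$. I expect this to be the main obstacle. I would first try the cylinder computation from Proposition \ref{IrrationalCylinder}: the horizontal direction of $X$ contains a cylinder of modulus $1$ (a half petal, or a petal square) and a cylinder of irrational modulus. The image in $Y$ has total area $\mathrm{Area}(X)/d$, and the number of horizontal cylinders of $Y$ is at most $g(Y)$ (hyperelliptic, single zero). Matching the explicit cylinder decomposition of $X$ with its preimage structure should force $d=1$.

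\textbf{Odd case.} Here $X=\Pi_n\in\mathcal H(n-2,n-2)$ has two zeros $z_1,z_2$, swapped by $-\mathrm{Id}$, and genus $n-1$. As before, $p^{-1}(\Sigma(Y))=\{z_1,z_2\}$. If both map to one zero $w$, then $2d$ must be split into local degrees whose total cone-angle count is $2(n-1)=d(m+1)$. If they map to two distinct zeros, then each is totally ramified with $n-1=d(m_i+1)$. In either case $d\mid 2(n-1)$, and Riemann--Hurwitz gives the same numerical constraints.

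Since $n-1$ is even, numerics alone do not exclude $d=2$. I would then argue via the petal/stem cylinder of modulus $1$ and the irrational cylinder: a degree-$d$ covering restricted to horizontal cylinders must send the unique cylinder with irrational modulus to a cylinder whose preimage consists of cylinders of commensurable moduli. The explicit horizontal decomposition of $\Pi_n$, in which the irrational cylinder appears with multiplicity one, then forces the fibre over its image to be a single cylinder of full degree $d$. Its circumference then grows by a factor $d$ while its height is fixed, and comparing this with the geometry of the layers of Lemma \ref{AnglesOfIterations} gives $d=1$.
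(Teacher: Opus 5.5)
\textbf{There is a genuine gap: nothing in your proposal rules out $\deg p=d>1$.} Your setup follows the paper: M\"oller's primitive cover, $Y$ not a torus, balancedness via Proposition~\ref{prop:balanced}, then reduce to showing $d=1$. Your Riemann--Hurwitz and cone-angle bookkeeping is correct. But it only gives $d\mid n-1$ in the even case and $d\mid 2(n-1)$ in the odd case, and real possibilities survive. For example, $n=10$, $d=3$, $Y\in\mathcal H(2)$ is consistent. So is, for every odd $n\ge 5$, $d=2$ with $Y\in\mathcal H(\tfrac{n-3}{2},\tfrac{n-3}{2})$.

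The fallback ideas do not close this. That the hyperelliptic involution descends to $Y$ yields no contradiction; every surface in $\mathcal H(2)$ is hyperelliptic. The cylinder plan has three problems:
\begin{itemize}
\item It needs the full horizontal cylinder decomposition of $\Pi_n$ and $\Pi_n^h$, which neither you nor the paper computes; Proposition~\ref{IrrationalCylinder} exhibits only one cylinder.
\item The claim that the irrational cylinder is the only cylinder of its height in its commensurability class is unverified.
\item Even granting it, you only learn that this cylinder is a $d$-fold cyclic cover of its image. That is not contradictory by itself, and ``comparing with the geometry of the layers'' names no mechanism.
\end{itemize}
A smaller slip: the degrees of the cylinders lying over a given cylinder of $Y$ sum to $d$; they need not divide $d$.

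\textbf{The missing idea is a rigidity property of balanced covers, which the paper uses.} Since $p$ is balanced, it is unbranched off $\Sigma(Y)$ and $p^{-1}(\Sigma(Y))=\Sigma(X)$. Hence saddle connections of $X$ map to saddle connections of $Y$ with the same holonomy, and each saddle connection of $Y$ has exactly $d$ lifts. Consequently, for any convex polygon bounded by saddle connections, such as the unit stem square $P$, the preimage $p^{-1}(p(P))$ consists of $d$ disjoint translates, each bounded by saddle connections.

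The paper then counts horizontal unit-length saddle connections on $X$. There are two when $n$ is odd or $n\equiv 2\pmod 4$, so $d\in\{1,2\}$; there are three when $4\mid n$, so $d\in\{1,3\}$. Finally it checks by inspection that $X$ does not contain $d\ge 2$ disjoint unit squares whose sides are horizontal and vertical unit saddle connections. Only the stem qualifies, or, when $4\mid n$, the stem and the petal on the vertical edge of the center $n$-gon. This forces $d=1$.

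Your numerics would combine well with this count. For $n\equiv 2\pmod 4$, $d$ is odd and $d\le 2$, so $d=1$ immediately.

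Separately, your reason that $Y$ is not a torus (``a torus cover has lattice Veech group'') is false for covers branched over several points. The paper makes the same inference, but it should be justified via periods instead, for instance by two horizontal cylinders with incommensurable circumferences.
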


\begin{proof}
First note that for $n\ne4$, Theorem \ref{thm:notlattice} shows that neither $\Pi_n$ nor $\Pi_n^h$ cover a torus since they are not lattice surfaces. In particular, there is a unique translation covering $p:\Pi_n\to Y$ for odd $n$ and $p:\Pi_n ^h\to Y$ for even $n\ge5$. By Proposition \ref{prop:balanced}, these coverings are balanced.

Suppose for the sake of contradiction that the degree $d$ of $p$ is at least 2.

By Proposition \ref{prop:balanced}, $p$ sends the zeros of the covering surface to the zeros of the base. hence, it sends saddle connections to saddle connections. Thus, for a convex polygon $P$ in the covering surface whose boundary edges are all saddle connections, we have $p(P)$ also has this property in the base surface. Then for any such $P$, the collection of polygons in $p^{-1}(p(P))$ is a collection of $d$ isometric polygons with disjoint interiors that are bounded by saddle connections and differ only by translation.

Now, let $n\ge3$ be odd or $n=4k+2$ for a natural number $k\ge1$. In this case, there are exactly two horizontal and vertical unit length saddle connections on $\Pi_n$ (when $n$ is odd) or $\Pi_n ^h$ (when $n=4k+2$ is even).

Consider $P$ given by the stem of the covering surface which is a unit square with horizontal and vertical sides that lies on the bottom horizontal edge of the center $n$-gon. Note these are the shortest of all the saddle connections of the surface. Then $p^{-1}(p(P))$ is a collection of $d$ isometric squares with disjoint interiors that are bounded by saddle connections and differ only by translation. Since there are exactly two horizontal unit length saddle connections, then any square in $p^{-1}(p(P))$ must use these saddle connections. But, above one of the two horizontal unit length saddle connections is $P$. Above the other horizontal unit length saddle connection, there is no unit length vertical saddle connection and so no other unit square with horizontal and vertical unit length sides. This is a contradiction and proves primitivity of  $\Pi_n$ (when $n$ is odd) or $\Pi_n ^h$ (when $n=4k+2$ is even).

Now we prove primitivity for $\Pi_n ^h$ in the remaining case that $n=4k+4$ for a natural number $k\ge1$. In this case, there are exactly three horizontal and vertical unit-length saddle connections. Since $\Pi_n^h$ is in the minimal stratum, then the base surface $Y$ is also in the minimal stratum. Since $k\ge1$, then, by Theorem \ref{thm:notlattice}, $Y$ is not a torus. 

Each saddle connection of $Y$ lifts to exactly $d$ saddle connections on $\Pi_n ^h$. Let $U$ denote the number of horizontal unit-length saddle connections. Then since we know all horizontal unit length saddle connections on $\Pi_n^h$, we have $dU=3$. Since we are assuming, for contradiction, $d\ne1$, we obtain that $U=1$ and $d=3$. That is, $p$ is a degree 3 translation covering. In particular, the stem $P$ of $\Pi_n ^h$ which is unit square with horizontal and vertical sides on the bottom horizontal edge of the center $n$-gon is a convex polygon whose boundary edges are all saddle connections. As such, $p^{-1}(p(P))$ is a collection of $3$ isometric squares with disjoint interiors that are bounded by saddle connections and differ only by translation. One such square is given by the stem, and another such square is given by the petal on the vertical edge of the center $n$-gon in $\Pi_n ^h$. The remaining square $S$ is bounded above and below by horizontal unit-length saddle connections. One of those horizontal unit length saddle connections is the bottom side of the stem, and another one of those is the top and bottom edge of the petal on the vertical edge of the center $n$-gon in $\Pi_n^h$. Since there are exactly three horizontal unit length saddle connections, then the remaining one $\gamma$ must be used as the bottom of $S$. The left and right sides of $S$, must be vertical unit length saddle connections, but the vertical saddle connection emanating from either end point of $\gamma$ is on the top of the $n$-gon. In particular, it is not unit-length, and this is a contradiction.
\end{proof}

With this we can prove Theorem \ref{thm:main}.
\begin{proof} [Proof of Theorem \ref{thm:main}]

    Suppose $n$ is even and different than $4.$ By Theorem \ref{thm:notlattice} we have $\Pi_n ^h$ is not a lattice surface and hence by the classification of Apisa \cite{MR3769676} of hyperelliptic components of strata (Theorem 5) the $\text{GL}(2,\mathbb R)$ orbit closure of $\Pi_n ^h$ must arise from some rank $r>1$ branched covering construction over a hyperelliptic minimal stratum of lower genus. On the other hand, by Theorem \ref{thm:primitive}, $\Pi_n^h$ is primitive, so that the $\text{GL}(2,\mathbb R)$ orbit closure of $\Pi_n ^h$ must coincide with $\mathcal H^{\text{hyp}}(n-2)$.

    In particular, by Eskin--Mirzakhani--Mohammadi \cite{MR3418528}, $\Pi_n ^h$ satisfies Equation \ref{eqn: EMM} for the Siegel-Veech constant associated to $\mathcal H^\text{hyp}(n-2)$. This constant was computed by  Eskin--Masur--Zorich \cite{MR2010740}. %there are exact asymptotics for the number of saddle connections in a ball. That is, there is a constant $c=c(\mathcal H(n-2))>0$ such that 
        %$$\lim_{L\to\infty}\frac{N(\Pi_n^h,L)}{\pi L^2}=\frac{c}{\text{Area}(\Pi_n^h)}.$$

        Now suppose  $n\ge3$ is odd. By Proposition \ref{IrrationalCylinder} and Theorem \ref{thm:primitive} we have $\Pi_n $ is primitive and not a lattice surface. By  the classification of Apisa \cite{MR3769676} of hyperelliptic components of strata (Theorem 6) the $\text{GL}(2,\mathbb R)$ orbit closure of $\Pi_n$ is either rank 1 or dense in $\mathcal H^{\text{hyp}}(n-2,n-2)$. Hence, if the orbit closure is not rank 1, then it must be dense. On the other hand, if the orbit closure had rank 1, then by Theorem 1.1 of Apisa \cite{MR4034916}, the orbit closure is either dense, closed, or arises from a branched covering construction and we may use Proposition \ref{IrrationalCylinder} and Theorem \ref{thm:primitive} to conclude the orbit closure must still be dense. That is, in either case, we have the orbit closure is dense and so $\overline{\text{GL}(2,\mathbb R)\cdot\Pi_n}=\mathcal H^{\text{hyp}}(n-2,n-2).$ 
        
        In particular, by Eskin--Mirzakhani--Mohammadi \cite{MR3418528}, $\Pi_n ^h$ satisfies Equation \ref{eqn: EMM} for the Siegel-Veech constant associated to $\mathcal H^\text{hyp}(n-2,n-2)$. This constant was computed by  Eskin--Masur--Zorich \cite{MR2010740}.
\end{proof}

With this we can prove Corollary \ref{cor:asymforPn}.

\begin{proof} [Proof of Corollary \ref{cor:asymforPn}]
    
    We first note that the degree $n$ cover from Proposition \ref{CoveringMap} $p:\tilde P_n\to \Pi_n$ is balanced since the zeros of $\tilde P_n$ get sent to the zeros of $\Pi_n$. As such, $N(\tilde {P_n},L)=nN(\Pi_n,L)$ for any $n\ge3$ and any $L>0$. Moreover, there are no regular pre-images of poles of $P_n$ in $\tilde P_n $ so that $N(\tilde P_n,L)=nN( P_n,L)$. We will use these equalities shortly.

    Furthermore, by Theorem \ref{thm:main}, we have
        $$\lim_{L\to\infty}\frac{1}{\pi^2L}\int_0^LN(\Pi^h _n,e^t)e^{-2t}dt=\frac{c}{\text{Area}(\Pi_n^h)}\text{ and }\lim_{L\to\infty}\frac{1}{\pi^2L}\int_0^LN(\Pi_n,e^t)e^{-2t}dt=\frac{d}{\text{Area}(\Pi_n)}.$$

    where $n\ge3$ is even in the first equality and $n\ge3$ is odd in the second.

    In particular, for $n\ge3$ and odd, we have
    \begin{align*}
    \lim_{L\to\infty}\frac{1}{\pi  L^2}\int_0^LN(P _n,e^t)e^{-2t}dt&=\lim_{L\to\infty}\frac{1}{n\pi  L^2}\int_0^LN(\tilde P_n ,e^t)e^{-2t}dt\\
    &=\lim_{L\to\infty}\frac{1}{n\pi  L^2}\int_0^LnN(\Pi_n,e^t)e^{-2t}dt\\
    &=\frac{d}{\text{Area}(\Pi_n)} =\frac{d}{\text{Area}(P_n)} .
    \end{align*}

   A similar argument works in the $n\ge3$ even case. Assume $n\ne4$. Note the degree $2n$ cover from $\tilde P_n\to\Pi_n^h$ is balanced since it sends zeros to zeros. We have,
       \begin{align*}
        \lim_{L\to\infty}\frac{1}{\pi  L^2}\int_0^LN(P _n,e^t)e^{-2t}dt&=        \lim_{L\to\infty}\frac{1}{n\pi  L^2}\int_0^LN(\tilde P _n,e^t)e^{-2t}dt\\
        &=\lim_{L\to\infty}\frac{2n}{n\pi  L^2}\int_0^LN(\Pi ^h _n,e^t)e^{-2t}dt\\
        &=\frac{2c}{\text{Area}(\Pi_n^h)}=\frac{4c}{\text{Area}(P_n)}.
        \end{align*}
    \end{proof}

This finishes the proofs of the main results.

\appendix
\section{The Irrationality Lemma}\label{Appendix} In this appendix, we give the proof of Lemma \ref{IrrationalityLemma}:
\begin{proof}
\begin{enumerate}
    \item Notice that, by the identity $\tan\prt{\alpha+\beta}=\frac{\tan\prt{\alpha}+\tan\prt{\beta}}{1-\tan\prt{\alpha}\tan\prt{\beta}}$, we have $\tan\prt{k\theta}$ is rational for $k\in\Z_+$ if $\tan\prt{\theta}$ is rational. Writing $n=2^am$, since $n\in\Z_{\geq 3}\setminus\lset{4}$, then we have
    \begin{itemize}
        \item for $m=1$, $n$ is at least $8$ and $8\mid n$, and we write $n=8c_1$ with $c_1\in\Z_+$
        \item for $m>1, m\neq 2$, $m$ has an odd prime divisor $p$, and we write $n=pc_2$ with $c_2\in\Z_+$.
    \end{itemize}
        So, if we can prove the irrationality of $\tan\prt{\frac{\pi}{8}}$ and $\tan\prt{\frac{\pi}{p}}$ with odd primes $p$, then $\frac{\pi}{8}=c_1\frac{\pi}{n}$ and $\frac{\pi}{p}=c_2\frac{\pi}{n}$ provide the irrationality of $\tan\prt{\frac{\pi}{n}}$.\\
Recall that $\tan\prt{\frac{\pi}{3}}=\sqrt{3}, \tan\prt{\frac{\pi}{5}}=\sqrt{5-2\sqrt{5}}$, and $\tan\prt{\frac{\pi}{8}}=\sqrt{2}-1$, which are all irrational. So, it suffices to consider primes larger than $5$. Towards contradiction, for $p>5$, suppose that $\tan\prt{\frac{\pi}{p}}$ is rational. Since $\sin\prt{\frac{\pi}{p}}=\frac{e^{\frac{i\pi}{p}}-e^{-\frac{i\pi}{p}}}{2i}$ and the set of algebraic numbers forms a field, $\sin\prt{\frac{\pi}{p}}$ is an algebraic number. Further, $\sin\prt{\frac{\pi}{p}}=\frac{\tan\prt{\frac{\pi}{p}}}{\sqrt{1+\tan^2\prt{\frac{\pi}{p}}}}$ and $\tan\prt{\frac{\pi}{p}}\in\Q$ force $\sin\prt{\frac{\pi}{p}}$ to have degree at most $2$ over $\Q$. Recall that $\cos\prt{\frac{\pi}{p}}=\frac{1}{\sqrt{1+\tan^2\prt{\frac{\pi}{p}}}}$, so $\sin\prt{\frac{\pi}{p}},\cos\prt{\frac{\pi}{p}}\in\Q\prt{\sqrt{1+\tan^2\prt{\frac{\pi}{p}}}}$ and $\sqrt{1+\tan^2\prt{\frac{\pi}{p}}}=\frac{1}{\cos\prt{\frac{\pi}{p}}}$ imply that $\Q\prt{\sin\prt{\frac{\pi}{p}},\cos\prt{\frac{\pi}{p}}}=\Q\prt{\sqrt{1+\tan^2\prt{\frac{\pi}{p}}}}$. Thus, we have \begin{align*}
    \lbrkt{\Q\prt{\sin\prt{\frac{\pi}{p}},\cos\prt{\frac{\pi}{p}}}:\Q}=\lbrkt{\Q\prt{\sqrt{1+\tan^2\prt{\frac{\pi}{p}}}}:\Q}\leq 2.
\end{align*} Let $\zeta_{2p}$ be the $2p$-th root of unity over $\Q$, then $\labs{\Q\prt{\zeta_{2p}}:\Q}=p-1$. Yet, since $\zeta_{2p}=e^\frac{i\pi}{p}=\cos\prt{\frac{\pi}{p}}+i\sin\prt{\frac{\pi}{p}}$, we have \begin{align*}
    4<p-1=\lbrkt{\Q\prt{\zeta_{2p}}:\Q}\leq\lbrkt{\Q\prt{\sin\prt{\frac{\pi}{p}},\cos\prt{\frac{\pi}{p}},i}:\Q}\leq 4,
\end{align*} whence the result. 
\item If $p\mid n$, then $\Q\prt{\tan{\prt{\frac{\pi}{p}}}}\subset\Q\prt{\tan\prt{\frac{\pi}{n}}}$, which implies that $\lbrkt{\Q\prt{\tan\prt{\frac{\pi}{p}}}:\Q}\leq\lbrkt{\Q\prt{\tan\prt{\frac{\pi}{n}}}:\Q}$. For $n\geq 7$, the only possibilities for primes $p\geq7$ that do not cover are those $n$ having only $2,3,5$ as the prime factors. Yet, $\tan\prt{\frac{\pi}{5}}=\sqrt{5-2\sqrt{5}}$ is an algebraic number of degree $\geq 4$ over $\Q$, so the only $n$ we have to consider are of the form $2^a3^b$. Let $\zeta_n$ be the $n$-th root of unity. Since $\tan\prt{\frac{\pi}{n}}=\frac{\zeta_n-1}{i\prt{\zeta_n+1}}$, we have $\Q\prt{\tan\prt{\frac{\pi}{n}},i}=\Q\prt{\zeta_n,i}=\Q\prt{\zeta_n,\zeta_4}=\Q\prt{\zeta_{\lcm\prt{n,4}}}$. So, the tower rule provides \begin{align}\label{tanvalue}
    \lbrkt{\Q\prt{\tan\prt{\frac{\pi}{n}}}:\Q}&=\frac{ \lbrkt{\Q\prt{\tan\prt{\frac{\pi}{n}},i}:\Q}}{\lbrkt{\Q\prt{\tan\prt{\frac{\pi}{n}},i}:\Q\prt{\tan\prt{\frac{\pi}{n}}}}}\nonumber\\&=\frac{\lbrkt{\Q\prt{\zeta_{\lcm\prt{n,4}}}:\Q}}{2}\\
    &=\frac{\phi\prt{\lcm\prt{n,4}}}{2}\nonumber,
\end{align} where $\phi$ is the Euler's totient function. Writing $n=2^a3^b$, we only have to rule out the cases for $n=6,8,12$.\\
Now, towards contradiction, suppose $\labs{\Q\prt{\tan\prt{\frac{\pi}{n}}}:\Q}\leq2$ for $n\in \Z_{\geq7}\setminus\lset{8,12}$. Back to $p\geq7$, since $\tan\prt{\theta}=\frac{\sin\prt{\theta}}{\cos\prt{\theta}}$, we have $\Q\prt{\sin\prt{\frac{\pi}{p}},\cos\prt{\frac{\pi}{p}}}=\Q\prt{\tan\prt{\frac{\pi}{p}},\cos\prt{\frac{\pi}{p}}}$. Over $\Q\prt{\tan\prt{\frac{\pi}{p}}}$, $\cos\prt{\theta}$ satisfies $\prt{1+\tan^2\prt{\theta}}\cos^2\prt{\theta}-1=0$. Hence, we have 
    \begin{align*}
        \lbrkt{\Q\prt{\sin\prt{\frac{\pi}{p}},\cos\prt{\frac{\pi}{p}}}:\Q}&=\lbrkt{\Q\prt{\tan\prt{\frac{\pi}{p}},\cos\prt{\frac{\pi}{p}}}:\Q}\\
        &=\lbrkt{\Q\prt{\tan\prt{\frac{\pi}{p}},\cos\prt{\frac{\pi}{p}}}:\Q\prt{\tan\prt{\frac{\pi}{p}}}}\lbrkt{\Q\prt{\tan\prt{\frac{\pi}{p}}}:\Q}\\
        &\leq 2\cdot 2\\
        &=4.
    \end{align*} Let $\zeta_{2p}$ be the $2p$-th root of unity. As in $(1)$, We have \begin{align*}
        \lbrkt{\Q\prt{\zeta_{2p}}:\Q}\leq\lbrkt{\Q\prt{\sin\prt{\frac{\pi}{p}},\cos\prt{\frac{\pi}{p}},i}:\Q}\leq 2\cdot 4=8.
    \end{align*} 
    If $p=7$, the identities $\sin\prt{\frac{\pi}{7}}=\frac{\zeta_{14}-\zeta_{14}^{-1}}{2i}$ and $\cos\prt{\frac{\pi}{7}}=\frac{\zeta_{14}+\zeta_{14}^{-1}}{2}$ imply that
    \begin{align*}
12=\lbrkt{\Q\prt{\zeta_{28}}:\Q}=\lbrkt{\Q\prt{\zeta_{14},i}:\Q}=\lbrkt{\Q\prt{\sin\prt{\frac{\pi}{7}},\cos\prt{\frac{\pi}{7}},i}:\Q}\leq 8.
    \end{align*}
If $p>7$, we have $10\leq p-1=\lbrkt{\Q\prt{\zeta_{2p}}:\Q}\leq 8$. Either case achieves a contradiction.
\end{enumerate}
\end{proof}

\end{document}